\newcommand\cE{{\mathcal E}}
\newcommand\cF{{\mathcal F}}
\newcommand\cG{{\mathcal G}}
\newcommand\cK{{\mathcal K}}
\newcommand\cL{{\mathcal L}}
\newcommand\cO{{\mathcal O}}
\newcommand\cS{{\mathcal S}}
\newcommand\cV{{\mathcal V}}
\newcommand\bbC{{\mathbb C}}
\newcommand\bbP{{\mathbb P}}
\newcommand\bbQ{{\mathbb Q}}
\newcommand{\codim}{{\rm codim}}
\DeclareMathOperator*{\Sing}{Sing}
\DeclareMathOperator*{\supp}{Supp}
\newcommand{\Chow}[1]{\ensuremath{\mbox{\rm Chow}(#1)}}
\newcommand{\Hilb}[1]{\ensuremath{\mbox{\rm Hilb}(#1)}}
\newcommand{\Hom}[2]{\ensuremath{\mbox{$\rm{Hom}$}(#1,#2)}}
\newcommand{\Hombir}[2]{\ensuremath{\mbox{$\rm{Hom}_1$}(#1,#2)}}
\newcommand{\cHom}[2]{\ensuremath{\mathcal{H}om_{\mathcal{O}_X}(#1,#2)}}
\newcommand\cTor{{\mathcal Tor}}
\newcommand{\morp}[3]{\ensuremath{#1\colon #2\rightarrow #3}}
\newcommand{\RC}[1]{\ensuremath{\mbox{\rm RatCurves}^n(#1)}}
\newtheorem{lemma1}{}[section]
\newenvironment{lemma}{\begin{lemma1}{\bf Lemma.}}{\end{lemma1}}
\newenvironment{example}{\begin{lemma1}{\bf Example.}\rm}{\end{lemma1}}
\newenvironment{thm}{\begin{lemma1}{\bf Theorem.}}{\end{lemma1}}
\newenvironment{prop}{\begin{lemma1}{\bf Proposition.}}{\end{lemma1}}
\newenvironment{cor}{\begin{lemma1}{\bf Corollary.}}{\end{lemma1}}
\newenvironment{remark}{\begin{lemma1}{\bf Remark.}\rm}{\end{lemma1}}
\newenvironment{defn}{\begin{lemma1}{\bf Definition.}}{\end{lemma1}}
\newenvironment{convention}{{\bf Convention.}}{}
\newenvironment{thm A}{{\bf Theorem A.}}{}
\newenvironment{thm B}{{\bf Theorem B.}}{}
\newenvironment{thm C}{{\bf Theorem C.}}{}
\newenvironment{thm D}{{\bf Theorem D.}}{}
\newenvironment{remark*}{{\bf Remark.}}{}
\newenvironment{example*}{{\bf Example.}}{}
\newenvironment{assumption*}{{\bf Assumption.}}{}
\setlist[itemize]{leftmargin=*}
\setlist[enumerate]{leftmargin=*}
\newcommand{\bigslant}[2]{{\raisebox{.2em}{$#1$}\left/\raisebox{-.2em}{$#2$}\right.}}
\title{CHARACTERIZATION OF PROJECTIVE SPACES AND $\mathbb{P}^r$-BUNDLES AS AMPLE DIVISORS}
\author{Jie Liu}
\address{Jie Liu, Universit{\'e} de C{\^o}te d'Azur, CNRS, LJAD, France}
\email{jliu@unice.fr}
\subjclass[2010]{Primary 14M20; Secondary 14C20,37F75}
\begin{document}
	
\begin{abstract}
	Let $X$ be a projective manifold of dimension $n$. Suppose that $T_X$ contains an ample subsheaf. We show that $X$ is isomorphic to $\bbP^n$. As an application, we derive the classification of projective manifolds containing a $\bbP^r$-bundle as an ample divisor by the recent work of D.~Litt.
\end{abstract}
	
\maketitle
	
\tableofcontents
	
\vspace{-0.2cm}

\section{Introduction}

Projective spaces are the simplest algebraic varieties. They can be characterized in many ways. A very famous one is given by the Hartshorne's conjecture, which was proved by Mori.

\begin{thm A}\cite[Theorem 8]{Mori1979}
	Let $X$ be a projective manifold defined over an algebraically closed field $k$ of characteristic $\geq 0$. Then $X$ is a projective space if and only if $T_X$ is ample.
\end{thm A}

This result has been generalized, over the field of complex number, by several authors \cite{Wahl1983,CampanaPeternell1998,AndreattaWisniewski2001}.

\begin{thm B}\cite[Theorem]{AndreattaWisniewski2001}
	Let $X$ be a projective manifold of dimension $n$. If $T_X$ contains an ample locally free subsheaf $\cE$ of rank $r$, then $X\cong\bbP^n$ and $\cE\cong\cO(1)^{\oplus r}$ or $\cE\cong T_{\bbP^n}$.
\end{thm B}

This theorem was successively proved for $r=1$ by Wahl \cite{Wahl1983} and latter for $r\geq n-2$ by Campana and Peternell \cite{CampanaPeternell1998}. The proof was finally completed by Andreatta and Wi{\'s}niewski \cite{AndreattaWisniewski2001}. The main aim of the present article is to prove the following generalization.

\begin{thm}\label{Main thm}
	Let $X$ be a projective manifold of dimension $n$. Suppose that $T_X$ contains an ample subsheaf $\cF$ of positive rank $r$, then $(X,\cF)$ is isomorphic to $(\bbP^n,T_{\bbP^n})$ or $(\bbP^n,\cO_{\bbP^	n}(1)^{\oplus r})$. 
\end{thm}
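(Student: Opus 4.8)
The plan is to reduce the general (possibly non-locally-free) case to the Andreatta–Wiśniewski Theorem B by extracting from the ample subsheaf $\cF \subseteq T_X$ an honest ample locally free subsheaf, or else to run Mori's bend-and-break machinery directly. I would first replace $\cF$ by its saturation in $T_X$, which is reflexive and still ample (saturation preserves or improves positivity and only enlarges the sheaf), so we may assume $\cF$ is a saturated ample reflexive subsheaf of rank $r$. The key geometric input is that ampleness of $\cF$ forces $X$ to be uniruled with many rational curves through a general point; concretely, for a general point $x$ and a rational curve $f\colon \bbP^1 \to X$ through $x$, the restriction $f^*\cF$ is an ample subsheaf of $f^*T_X$, which constrains the splitting type $f^*T_X \cong \bigoplus \cO(a_i)$ and gives a large lower bound on $\sum_{a_i>0}(a_i+1)$, i.e. on the dimension of the space of deformations of the curve fixing $x$.

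The main steps I would carry out are: (i) use the ampleness of $\cF$ together with Mori's theorem to produce a minimal rational curve $C$ through a general point $x$ whose deformations fixing $x$ sweep out $X$, and estimate $-K_X \cdot C$ from below using $\deg f^*\cF > 0$; (ii) bound the dimension of the family $\cH_x$ of such curves through $x$ from below by $r$-related quantities coming from the positivity of $\cF|_C$, aiming to show $-K_X\cdot C \geq n+1$, the extremal value forcing $X\cong\bbP^n$ by Cho–Miyaoka–Shepherd-Barron / Kebekus-type characterizations; (iii) on $\bbP^n$, identify the saturated ample subsheaf $\cF$ of $T_{\bbP^n}$: since $T_{\bbP^n}$ is ample with the only saturated ample subsheaves of each rank being twists of $\cO(1)$ or $T_{\bbP^n}$ itself (by the Euler sequence and stability of $T_{\bbP^n}$), conclude $\cF \cong \cO(1)^{\oplus r}$ or $\cF = T_{\bbP^n}$ (the rank-$n$ case).

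The hard part will be handling the failure of local freeness: the classical proof of Theorem B leans on the fact that an ample \emph{locally free} subsheaf restricts to an ample bundle on every curve with controlled splitting type, whereas a merely reflexive (or torsion-free) $\cF$ can drop rank or fail to be a subbundle along the singular locus $Z = \Sing(\cF)$, which has codimension $\geq 2$. The strategy to overcome this is to choose the minimal rational curves through a \emph{general} point so that they avoid $Z$ (possible since $\codim Z \geq 2$ and the curves move in a family of dimension $\geq n-1$ through $x$), so that $f^*\cF$ is locally free of rank $r$ along $C$ and the splitting-type estimates of the locally free case apply verbatim on these general curves. Making the dimension count in step (ii) robust enough to yield $-K_X \cdot C \geq n+1$ from $r$-dimensional positivity of $\cF|_C$—rather than the full-rank positivity used in Theorem A—is the crux, and I expect to use the standard inequality $\dim_{[C]} \cH_x \geq -K_X\cdot C - 2 \geq \deg(f^*\cF) - 1 \geq r$ combined with the fact that these deformations fixing $x$ must cover $X$ to pin down the extremal case; once $X\cong\bbP^n$ is secured, step (iii) is a short representation-theoretic argument about subsheaves of $T_{\bbP^n}$.
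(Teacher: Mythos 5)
Your step (ii) contains the fatal gap. Ampleness of a rank-$r$ subsheaf $\cF\subset T_X$ restricted to a minimal rational curve $C$ only forces $\cF\vert_C$ to inject into the positive part $\cO(2)\oplus\cO(1)^{\oplus d}$ of the splitting $T_X\vert_C\cong\cO(2)\oplus\cO(1)^{\oplus d}\oplus\cO^{\oplus(n-d-1)}$, giving $r\leq d+1$, i.e.\ $-K_X\cdot C=d+2\geq r+1$ --- not $n+1$. (Your chain $\dim_{[C]}\cH_x\geq -K_X\cdot C-2\geq\deg(f^*\cF)-1\geq r$ is also off: already for $\cF=T_{\bbP^n}$ and $C$ a line it would assert $n-1\geq n$; the true bound is $d\geq r-1$.) So for $r<n$ no splitting-type or dimension count along minimal curves can reach the Cho--Miyaoka--Shepherd-Barron threshold, and your phrase ``combined with the fact that these deformations fixing $x$ must cover $X$'' assumes exactly what must be proved: by Araujo's theorem (Theorem 2.4 of the paper) the curves through a general point sweep out only a $\bbP^{d+1}$, the fiber of the $\bar{\cV}$-rationally connected quotient $\varphi_0\colon X_0\to T_0$, and the entire difficulty is to show $\dim T_0=0$. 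The paper does this by a genuinely global argument with no local substitute: the saturation $\overline{\cF}$ defines an algebraically integrable foliation whose general leaves are the $\bbP^{d+1}$-fibers (Theorem 3.7, via Theorem B applied fiberwise), the associated Pfaff field extends to the normalized universal family over a well-chosen curve in the Chow variety, and a weak-positivity argument (Theorem 4.1, a variant of Araujo--Druel) shows $-K_{U/C}-\Delta$ cannot be ample on a large open set; this forces the closures of general leaves to share a common point (Proposition 4.3), which is incompatible with $\dim T_0\geq 1$. Nothing in your outline plays the role of this step.

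Two secondary problems. First, your opening move ``replace $\cF$ by its saturation, which is still ample'' is unjustified: ampleness passes to quotients, not to sheaves containing a given ample sheaf, and it is not known that the saturation of an ample subsheaf is ample. The paper deliberately avoids this, keeping $\cF$ and $\overline{\cF}$ separate: it uses the foliation structure of $\overline{\cF}$ but only the positivity of $\cO_X(-K_{\cF})$, which is ample merely on an open subset $X'$ with $\codim(X\setminus X')\geq 2$, and the map $\cO_X(K_{\overline{\cF}})\to\cO_X(K_{\cF})$ induced by $\cF\hookrightarrow\overline{\cF}$. Second, your step (iii) is unnecessary work presented vaguely: once $X\cong\bbP^n$ is established, the paper simply invokes Aprodu--Kebekus--Peternell (which proves $\cF$ is locally free when $\rho(X)=1$) and then Theorem B to identify $(\bbP^n,\cF)$ as $(\bbP^n,T_{\bbP^n})$ or $(\bbP^n,\cO_{\bbP^n}(1)^{\oplus r})$; a bare appeal to stability of $T_{\bbP^n}$ does not by itself handle a possibly non-locally-free saturated subsheaf.
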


we refer to $\S$ \ref{ample sheaf} for the basic definition and properties of ample sheaves. Comparing with Theorem B, we don't require a priori the locally freeness of the subsheaf $\cF$ in Theorem \ref{Main thm}. In case the Picard number of $X$ is one, Theorem \ref{Main thm} is proved in \cite{AproduKebekusPeternell2008}. In fact, in \cite{AproduKebekusPeternell2008}, they showed that the subsheaf $\cF$ must be locally free under the additional assumption $\rho(X)=1$ and then Theorem B implies immediately Theorem \ref{Main thm}. In particular, to prove Theorem \ref{Main thm}, it suffices to show that $X$ is isomorphic to some projective space if its tangent bundle contains an ample subsheaf $\cF$, then the locally freeness of $\cF$ follows from \cite{AproduKebekusPeternell2008}. An interesting and important special case of Theorem \ref{Main thm} is when the subsheaf $\cF$ comes from the image of an ample vector bundle $E$ over $X$, this confirms a conjecture of Litt in \cite{Litt2016}.

\begin{cor}\label{Main Cor}
	Let $X$ be a projective manifold of dimension $n$, $E$ an ample vector bundle on $X$. If there exists a non-zero map $E\rightarrow T_X$, then $X\cong \bbP^n$.
\end{cor}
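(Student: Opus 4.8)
The plan is to derive Corollary \ref{Main Cor} as an immediate consequence of Theorem \ref{Main thm}. The key observation is that a non-zero morphism $\phi\colon E\to T_X$ from an ample vector bundle $E$ produces an ample subsheaf of $T_X$, namely the image $\cF=\im(\phi)$. Once I know that the image of an ample vector bundle under a sheaf morphism is again ample, Theorem \ref{Main thm} applies verbatim: since $\cF$ has positive rank and is ample, $X$ must be isomorphic to $\bbP^n$ (the precise structure of $\cF$ is not needed for the conclusion $X\cong\bbP^n$).

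First I would record that $\phi$ is non-zero, so its image $\cF\subseteq T_X$ is a coherent subsheaf of positive rank; here I use that $T_X$ is torsion-free, hence any non-zero subsheaf has rank $\geq 1$. Next I would invoke the basic permanence property of ample sheaves developed in $\S$ \ref{ample sheaf}: a quotient of an ample sheaf is ample. Since $\cF$ is a quotient of $E$ via the surjection $E\twoheadrightarrow \cF$ induced by $\phi$, and $E$ is ample (an ample locally free sheaf is ample as a coherent sheaf in the relevant sense), it follows that $\cF$ is an ample subsheaf of $T_X$.

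With $\cF$ in hand I would simply apply Theorem \ref{Main thm} to the pair $(X,\cF)$, concluding that $X\cong\bbP^n$. I do not need the full dichotomy of Theorem \ref{Main thm} distinguishing $T_{\bbP^n}$ from $\cO_{\bbP^n}(1)^{\oplus r}$, since the corollary only asserts the isomorphism type of $X$.

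The only real obstacle is confirming that a quotient sheaf of an ample vector bundle is ample in the sense used in this paper; this is precisely the kind of foundational statement I expect to be established in the section on ample sheaves, and once it is available the corollary is formal. I would therefore make sure the definition of ampleness for non-locally-free coherent sheaves is the one for which surjective images inherit ampleness, so that the step $E\twoheadrightarrow\cF$ legitimately transfers the hypothesis.
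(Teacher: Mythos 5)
Your proof is correct and is exactly the paper's intended argument: the image $\cF=\im(\phi)$ is ample as a quotient of the ample sheaf $E$ (property (iv) in $\S$ \ref{ample sheaf}, which explicitly notes that the image of an ample sheaf under a non-zero map is ample), has positive rank as a non-zero subsheaf of the torsion-free sheaf $T_X$, and Theorem \ref{Main thm} then gives $X\cong\bbP^n$. The paper treats this as immediate for the same reasons, describing the corollary precisely as the special case where the subsheaf is the image of an ample vector bundle.
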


As an application, we derive the classification of projective manifolds containing a $\bbP^r$-bundle as an ample divisor. This problem has attracted a lot of interest over the past few decades (see \cite{Sommese1976,Badescu1984,FaniaSatoSommese1987,BeltramettiSommese1995,BeltramettiIonescu2009} etc.). Recently in \cite[Corollary 7]{Litt2016} Litt proved that it can be reduced to Corollary \ref{Main Cor}. To be more precise, we have the following classification theorem.

\begin{thm}\label{Conjecture BS}
	Let $X$ be a projective manifold of dimension $n\geq 3$, let $A$ be an ample divisor on $X$. Assume that $A$ is a $\bbP^r$-bundle, $\morp{p}{A}{B}$, over a manifold $B$ of dimension $>0$. Then one of the following holds:
	\begin{enumerate}
		\item $(X,A)=(\bbP(E),H)$ for some ample vector bundle $E$ over $B$ such that  $H\in\vert\cO_{\bbP(E)}(1)\vert$. $p$ is equal to the restriction to $A$ of the induced projection $\bbP(E)\rightarrow B$.
		
		\item $(X,A)=(\bbP(E),H)$ for some ample vector bundle $E$ over $\bbP^1$ such that $H\in\vert \cO_{\bbP(E)}(1)\vert$. $H=\bbP^1\times\bbP^{n-2}$ and $p$ is the projection to the second factor. 
		
		\item $(X,A)=(Q^3,H)$, where $Q^3$ is a smooth quadric threefold and $H$ is a smooth quadric surface with $H\in\vert \cO_{Q^3}(1)\vert$. $p$ is the projection to one of the factors $H\cong \bbP^1\times \bbP^1$.
		
		\item $(X,A)=(\bbP^3,H)$. $H$ is a smooth quadric surface and $H\in\vert\cO_{\bbP^3}(2)\vert$, and $p$ is again a projection to one of the factors of $H\cong\bbP^1\times\bbP^1$. 
	\end{enumerate}
\end{thm}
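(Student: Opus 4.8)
The plan is to reduce Theorem \ref{Conjecture BS} to the characterization result, Corollary \ref{Main Cor}, following the strategy of Litt. First I would invoke the general structure of ample divisors: since $A$ is an ample divisor on the projective manifold $X$, the normal bundle $N_{A/X}=\cO_A(A)$ is an ample line bundle on $A$. Because $A$ carries the structure of a $\bbP^r$-bundle $\morp{p}{A}{B}$, I want to understand how this bundle structure interacts with the ambient geometry of $X$. The key point, due to Litt, is that the fibration $p$ extends, at least infinitesimally, to a neighborhood of $A$ in $X$, producing an ample vector bundle mapping to the tangent sheaf of the base. Concretely, one analyzes the restriction $T_X\vert_A$ and the relative tangent sequence of $p$ to manufacture a non-zero sheaf map from an ample vector bundle into $T_B$ (or into the tangent bundle of a variety closely related to $B$).

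\smallskip

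Next I would carry out the following steps in order. (i) Examine the fibers $F\cong\bbP^r$ of $p$ inside $X$: by adjunction and the ampleness of $A$, compute the normal bundle of a fiber $F$ in $X$ and show these fibers are extremal rational curves' sweep-outs, so that $p$ is the restriction of a genuine contraction or projection on $X$ near $A$. (ii) Apply Litt's reduction \cite[Corollary 7]{Litt2016}, which says precisely that under these hypotheses either $X$ itself is a projective bundle $\bbP(E)$ over $B$ with $A\in\vert\cO_{\bbP(E)}(1)\vert$ (the generic, structurally stable case giving outcome (i)), or one is in a degenerate situation where the base $B$ is forced to be very special. The mechanism of Litt's reduction is exactly the production of an ample vector bundle $E$ on $B$ together with a non-zero homomorphism $E\to T_B$; (iii) then Corollary \ref{Main Cor} forces $B\cong\bbP^m$ for the appropriate $m$, and this rigidity is what collapses the degenerate cases into the short explicit list (ii)--(iv).

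\smallskip

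With $B$ pinned down as a projective space (or $\bbP^1$ in the low-dimensional degenerations), I would finish by a case analysis on $\dim B$ and $r$, using the numerical constraint $\dim A=\dim B+r=n-1$ together with the classification of ample divisors on projective bundles. When $\dim B\geq 2$ and the bundle structure is unobstructed, one lands in case (i). The sporadic cases (ii)--(iv) arise when $n$ is small: for $B\cong\bbP^1$ one gets $X=\bbP(E)$ over $\bbP^1$ with $A\cong\bbP^1\times\bbP^{n-2}$ as in (ii); the remaining two cases correspond to the classical small-dimensional exceptions where $X$ is a smooth quadric threefold $Q^3$ or $\bbP^3$, with $A$ a quadric surface $\bbP^1\times\bbP^1$. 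These last two I would extract by directly analyzing threefolds $X$ admitting a quadric surface as ample divisor, matching $N_{A/X}$ against $\cO_{\bbP^1\times\bbP^1}(a,b)$ and using adjunction to constrain $-K_X$.

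\smallskip

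I expect the main obstacle to be step (ii): verifying cleanly that the $\bbP^r$-bundle structure on the divisor $A$ genuinely propagates to an ample vector bundle mapping to $T_B$, so that Corollary \ref{Main Cor} can be applied. The subtlety is that the projection $p$ lives only on $A$, not a priori on $X$, and one must control the obstruction to extending it off $A$ — equivalently, control $H^1$ of the relevant normal bundle twists — which is exactly where the ampleness of $A$ and the positivity of $\cO_{\bbP^r}$-fiber directions must be combined. Since this extension statement is supplied by Litt's \cite[Corollary 7]{Litt2016}, the real work on our side is organizing the degenerate cases and ruling out spurious possibilities via the sharp conclusion $B\cong\bbP^m$ coming from our Corollary \ref{Main Cor}.
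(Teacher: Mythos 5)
Your high-level strategy is the same as the paper's: reduce to Corollary \ref{Main Cor} via Litt's work, so that the base $B$ is forced to be a projective space, and then appeal to classification results. But your final case analysis contains a genuine error that would derail the proof. The dichotomy supplied by Litt (Proposition \ref{Extension Litt} in the paper, i.e.\ \cite[Lemma 4]{Litt2016}) is: either $p$ extends to a morphism $\morp{\widehat{p}}{X}{B}$, or there is an ample vector bundle $E$ on $B$ with a non-zero map $E\rightarrow T_B$. It is this extension/non-extension alternative --- not the size of $\dim B$ --- that separates case (i) from the remaining cases. Your claims that ``$\dim B\geq 2$ \dots\ lands in case (i)'' and that case (ii) occurs ``for $B\cong\bbP^1$'' get the geometry of case (ii) backwards: there, $p\colon A=\bbP^1\times\bbP^{n-2}\rightarrow B$ is the projection onto the \emph{second} factor, so $B\cong\bbP^{n-2}$ (of arbitrary dimension), while it is $X$, not the base of $p$, that fibers over $\bbP^1$. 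Case (ii) is precisely the outcome of the non-extending branch for every $n\geq 3$; it is not a small-$n$ sporadic case.

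Beyond this, two concrete steps are missing. First, when $p$ does extend, concluding $(X,A)=(\bbP(E),H)$ with $H\in\vert\cO_{\bbP(E)}(1)\vert$ is not automatic; the paper invokes \cite[Theorem 5.5]{BeltramettiIonescu2009} (and the case $r\geq 2$ is disposed of at the outset by Sommese's extension theorem \cite{Sommese1976}, which your sketch never actually uses). Second, and more importantly, in the non-extending branch Corollary \ref{Main Cor} only tells you $B\cong\bbP^b$; to pass from that to the structure of $X$ the paper applies \cite[Theorem 2.1]{FaniaSatoSommese1987} when $b\geq 3$ to conclude that $X$ is a $\bbP^{n-1}$-bundle over $\bbP^1$ (case (ii)), and quotes B\u{a}descu \cite{Badescu1984} for $b=1$ and \cite[Theorem 7.4]{BeltramettiIonescu2009} for $b=2$, which is where the threefold cases (iii) and (iv) enter. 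Your proposal offers no mechanism for this last passage --- the ``rigidity collapses the degenerate cases'' assertion is unsupported --- and your suggested direct adjunction analysis of threefolds containing a quadric surface would amount to re-proving B\u{a}descu's theorem rather than filling the actual gap, which sits in the case $b\geq 3$.
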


\begin{convention}
	Throughout we work over the field $\bbC$ of complex numbers unless otherwise stated. Varieties are always assumed to be integral separated schemes of finite type over $\bbC$. If $D$ is a Weil divisor on a projective normal variety $X$, we denote by $\cO_X(D)$ the reflexive sheaf associated to $D$. Given a coherent sheaf $\cF$ on a variety $X$ of generic rank $r$, then we denote by $\cF^{\vee}$ the sheaf $\cHom{\cF}{\cO_X}$ and by $\det(\cF)$ the sheaf $(\wedge^r\cF)^{\vee\vee}$. We denote by $\cF(x)=\cF_x\otimes_{\cO_{X,x}} k(x)$ the fiber of $\cF$ at $x\in X$.  If $\cF$ is a coherent sheaf on a variety $X$, we denote by $\bbP(\cF)$ the Grothendieck projectivization $\rm{Proj}\left(\oplus_{m\geq 0}\rm{Sym}^m \cF\right)$. If $\morp{f}{X}{Y}$ is a morphism between projective normal varieties, we denote by $\Omega^1_{X/Y}$ the relative differential sheaf. Moreover, if $Y$ is smooth, we denote by $K_{X/Y}$ the relative canonical divisor $K_X-f^*K_Y$ and by $\omega_{X/Y}$ the reflexive sheaf $\omega_X\otimes f^*\omega^\vee_Y$.
\end{convention}

\section{Ample sheaves and rational curves}\label{Section 2}

Let $X$ be a projective manifold. In this section, we gather some results about the behavior of an ample subsheaf $\cF\subset T_X$ with respect to a family of minimal rational curves on $X$. 

\subsection{Ample sheaves}\label{ample sheaf}
Recall that an invertible sheaf $\cL$ on a quasi-projective variety $X$ is said to be \emph{ample} if for every coherent sheaf $\cG$ on $X$, there is an integer $n_0>0$ such that for every $n\geq n_0$, the sheaf $\cG\otimes\cL^n$ is generated by its global sections \cite[\S~II.7]{Hartshorne1977}. In general, a coherent sheaf $\cF$ on a quasi-projective variety $X$ is said to be \emph{ample} if the invertible sheaf $\cO_{\bbP(\cF)}(1)$ is ample on $\bbP(\cF)$ \cite{Kubota1970}.

Well-known properties of ampleness of locally free sheaves still hold in this general setting.

\begin{enumerate}		
	\item A sheaf $\cF$ on a quasi-projective variety $X$ is ample if and only if, for any coherent sheaf $\cG$ on $X$, $\cG\otimes\rm{Sym}^m\cF$ is globally generated for $m\gg 1$ \cite[Theorem 1]{Kubota1970}.
	
	\item If $\morp{i}{Y}{X}$ is an immersion, and $\cF$ is an ample sheaf on $X$, then $i^*\cF$ is an ample sheaf on $Y$ \cite[Proposition 6]{Kubota1970}.
	
	\item If $\morp{\pi}{Y}{X}$ is a finite morphism with $X$ and $Y$ quasi-projective varieties, and $\cF$ is a coherent sheaf on $X$, then $\cF$ is ample if and only if $\pi^*\cF$ is ample. Note that $\bbP(\pi^*\cF)=\bbP(\cF)\times_X Y$ and $\cO_{\bbP(\cF)}(1)$ pulls back, by a finite morphism, to $\cO_{\bbP(\pi^*\cF)}(1)$.
	
	\item Any quotient of an ample sheaf is ample \cite[Proposition 1]{Kubota1970}. In particular, the image of an ample sheaf under a non-zero map is also ample.
	
	\item If $\cF$ is a locally free ample sheaf of rank $r$, then the $s^{th}$ exterior power $\wedge^s \cF$ is ample for any $1\leq s\leq r$ \cite[Corollary 5.3]{Hartshorne1966}.
	
	\item If $\cL$ is an ample invertible sheaf on a quasi-projective variety $X$, then $\cL^m$ is very ample for some $m>0$, i.e., there is an immersion $i\colon X\rightarrow\bbP^n$ for some $n$ such that $\cL^m=i^*\cO_{\bbP^n}(1)$ \cite[II, Theorem 7.6]{Hartshorne1977}.
\end{enumerate}

\subsection{Minimal rational curves}

Let $X$ be a normal projective variety. We will denote by $\Hom{\bbP^1}{X}$ the open subscheme $\subset \Hilb{\bbP^1\times X}$ of morphisms from $\bbP^1$ to $X$. Let $\Hombir{\bbP^1}{X}\subset\Hom{\bbP^1}{X}$ be the open subscheme corresponding to those morphisms $f\colon\bbP^1\rightarrow X$ which are birational onto their image. The group $Aut(\bbP^1)$ acts on $\Hombir{\bbP^1}{X}$ and its quotient "really parametrizes" morphisms from $\bbP^1$ into $X$. It can be proved that the quotient exists and its normalization will be denoted $\RC{X}$ and called the \emph{space of rational curve} on $X$. For more details we refer to \cite{Kollar1996}.

Let $\cV$ be an irreducible component of $\RC{X}$. $\cV$ is said to be \emph{a covering family of rational curves} on $X$ if the corresponding universal family dominates $X$. A covering family $\cV$ of rational curves on $X$ is called \emph{minimal} if its general members have minimal degree with respect to some polarization. If $X$ is a uniruled projective manifold, then $X$ carries a minimal covering family of rational curves. We fix such a family $\cV$, and let $[\ell]\in\cV$ be a general point. Then the tangent bundle $T_X$ can be decomposed on the normalization of $\ell$ as $\cO_{\bbP^1}(2)\oplus\cO_{\bbP^1}(1)^{\oplus d}\oplus\cO_{\bbP^1}^{\oplus (n-d-1)}$, where $d+2=\det(T_X)\cdot\ell\geq 2$ is the \emph{anticanonical degree} of $\cV$.

Let $\bar{\cV}$ be the normalization of the closure of $\cV$ in $\Chow{X}$. We define the following equivalence relation on $X$. Two points $x$, $y\in X$ are $\bar{\cV}$-equivalent if they can be connected by a chain of $1$-cycle from $\bar{\cV}$. By \cite{Campana1992} (see also \cite{KollarMiyaokaMori1992}), there exists a proper surjective morphism $\varphi_0\colon X_0\rightarrow T_0$ from an open subset of $X$ onto a normal variety $T_0$ whose fibers are $\bar{\cV}$-equivalence classes. We call this map the \emph{$\bar{\cV}$-rationally connected quotient} of $X$.

The first step towards Theorem \ref{Main thm} is the following result which was essentially proved in \cite{Araujo2006}.

\begin{thm}\label{Araujo Extension}\cite[Proposition 2.7]{AraujoDruelKovacs2008}
	Let $X$ be a projective uniruled manifold, $\cV$ a minimal covering family of rational curves on $X$. If $T_X$ contains a subsheaf $\cF$ of rank $r$ such that $\cF\vert_{\ell}$ is an ample vector bundle for a general member $[\ell]\in\cV$, then there exists a dense open subset $X_0$ of $X$ and a $\bbP^{d+1}$-bundle $\varphi_0\colon X_0\rightarrow T_0$ such that any curve on $X$ parametrized by $\cV$ and meeting $X_0$ is a line on a a fiber of $\varphi_0$. In particular, $\varphi_0$ is the $\bar{\cV}$-rationally connected quotient of $X$. 
\end{thm}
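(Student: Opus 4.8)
The plan is to reduce the statement to the Cho--Miyaoka--Shepherd-Barron characterization of projective space, applied to a general fibre of the $\bar\cV$-rationally connected quotient, after extracting a foliation from the positive part of $T_X$ along the curves of $\cV$. First I would record the consequence of the ampleness hypothesis on the splitting type. Fix a general member $[\ell]\in\cV$ with normalization $f\colon\bbP^1\to X$, so that $f^*T_X\cong\cO_{\bbP^1}(2)\oplus\cO_{\bbP^1}(1)^{\oplus d}\oplus\cO_{\bbP^1}^{\oplus(n-d-1)}$. Since $\cF|_\ell$ is an ample vector bundle on $\bbP^1$, every summand of $f^*\cF$ has positive degree; as $f^*\cF$ injects into $f^*T_X$, its image must lie in the \emph{positive part} $\cP:=\cO_{\bbP^1}(2)\oplus\cO_{\bbP^1}(1)^{\oplus d}$, of rank $d+1$. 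In particular $r\leq d+1$, and $\cP$ is exactly the ample part of $T_X|_\ell$, independent of the particular $\cF$.

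Next I would construct, on a dense open subset $X_0\subset X$, a saturated subsheaf $\cD\subset T_X|_{X_0}$ whose restriction to a general $\ell$ equals $\cP$ (so $\rk\cD=d+1$), and show that $\cD$ is an integrable foliation with algebraic leaves. The natural device is the universal family $p\colon\cU\to\cV$, $e\colon\cU\to X$ together with the tangent map $\cU\dashrightarrow\bbP(T_X)$; by Kebekus's results this map is finite on fibres, so the variety of minimal rational tangents $\cC_x\subset\bbP(T_xX)$ at a general point $x$ has dimension $d$. The heart of the argument, and where the ampleness of $\cF$ is genuinely used, is to prove that $\cC_x$ is a linear subspace $\bbP^d\subset\bbP(T_xX)$ and that the $\bar\cV$-equivalence class of $x$ has dimension exactly $d+1$: the ample subsheaf constrains the deformations of $\ell$ with a point fixed and, through a bend-and-break type estimate, prevents the equivalence classes from exceeding the span of the positive directions. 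This identifies $\cD$ as an integrable distribution whose general leaf closure is the $\bar\cV$-equivalence class, that is, the general fibre of the quotient $\varphi_0\colon X_0\to T_0$ furnished by Campana and Koll\'ar--Miyaoka--Mori; by construction such a fibre is smooth of dimension $d+1$.

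It then remains to identify a general fibre $F$ of $\varphi_0$. Because $F$ is a leaf of $\cD$, we have $T_F|_\ell\cong\cD|_\ell\cong\cP=\cO_{\bbP^1}(2)\oplus\cO_{\bbP^1}(1)^{\oplus d}$ for a general curve $\ell\subset F$; hence $-K_F\cdot\ell=\deg(T_F|_\ell)=d+2=\dim F+1$, and the curves of $\cV$ lying in $F$ form a minimal covering family of $F$ of this anticanonical degree. The Cho--Miyaoka--Shepherd-Barron characterization of projective space, in the form that a smooth projective variety of dimension $m$ carrying a minimal covering family of rational curves of anticanonical degree $m+1$ is isomorphic to $\bbP^m$ with the family given by lines, then yields $F\cong\bbP^{d+1}$ and shows that every curve of $\cV$ meeting $X_0$ is a line in such a fibre. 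Shrinking $X_0$ if necessary so that all fibres are of this form makes $\varphi_0$ a $\bbP^{d+1}$-bundle, and since its fibres are precisely the $\bar\cV$-equivalence classes it is the $\bar\cV$-rationally connected quotient.

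I expect the main obstacle to be the middle step: establishing integrability of $\cD$ and, above all, the dimension bound showing that the $\bar\cV$-equivalence class through a general point is exactly the $(d+1)$-dimensional span of the positive directions rather than something larger. This is precisely the point at which the ampleness of $\cF|_\ell$ must be converted, through the deformation theory of the minimal family, into rigidity of the foliation, the linearity of $\cC_x$ being the concrete manifestation of this rigidity. Once that is in place, the remaining steps are the standard theory of rationally connected quotients and a direct application of the projective-space characterization.
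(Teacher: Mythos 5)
This theorem is not proved in the paper at all: it is imported verbatim from \cite[Proposition 2.7]{AraujoDruelKovacs2008}, whose proof in turn rests on Araujo's structure theorem for families whose variety of minimal rational tangents $\cC_x$ is linear. So your proposal must be measured against that cited proof, and in terms of architecture it matches it well: the observation that ampleness of $\cF\vert_\ell$ forces $\cF\vert_\ell$ into the positive part $\cP=\cO_{\bbP^1}(2)\oplus\cO_{\bbP^1}(1)^{\oplus d}$ is correct (the composite $\cF\vert_\ell\rightarrow\cO_{\bbP^1}^{\oplus(n-d-1)}$ is an ample quotient inside a trivial bundle, hence zero), and the endgame --- general fibers of the rationally connected quotient are smooth, projective, of anticanonical degree $d+2$ along the curves, hence isomorphic to $\bbP^{d+1}$ by the Cho--Miyaoka--Shepherd-Barron/Kebekus characterization, with the curves becoming lines --- is exactly how the literature concludes.

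The genuine gap is that your middle step, which you yourself flag as the obstacle, is precisely where all the mathematical content of the statement lives, and you replace it by assertions. Two concrete failures. First, the only mechanism you gesture at for linearity of $\cC_x$ (``the ample subsheaf constrains the deformations of $\ell$ with a point fixed'') amounts to the standard fact that the affine tangent space of $\cC_x$ at $[T_\ell(x)]$ is $\cP_\ell(x)$, which contains the fixed subspace $\cF(x)$. But a $d$-dimensional projective variety all of whose embedded tangent spaces contain a fixed $\bbP^{r-1}$ is only forced to be a \emph{cone} with vertex containing that $\bbP^{r-1}$; it is linear only when $r=d+1$. Since the hypothesis allows $r$ as small as $1$, this argument stalls at ``cone'', and passing from cone to linear (or otherwise exploiting $\cF$) is the real work, which you do not supply. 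Second, your ``bend-and-break type estimate'' cannot by itself bound the dimension of the $\bar\cV$-equivalence classes by $d+1$: the quadric $Q^3$ is rationally chain connected by its minimal family of lines, which is unsplit of anticanonical degree $d+2=3$, yet its equivalence class is all of $Q^3$, of dimension $3>d+1=2$. So no estimate based only on unsplitness and degree can work; the ample subsheaf must enter the dimension bound in an essential way that your sketch never specifies. The same remark applies to the asserted integrability of $\cD$ and the algebraicity of its leaves, which is essentially Araujo's theorem and not a formality. Until these points are argued, the proposal is a correct skeleton with its load-bearing member missing.
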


Recall that \emph{the singular locus} $\rm{Sing}(\cS)$ of a coherent sheaf $\cS$ over $X$ is the set of all points of $X$ where $\cS$ is not locally free.

\begin{remark}
	The hypothesis in Theorem \ref{Araujo Extension} that $\cF$ is locally free over a general member of $\cV$ is automatically satisfied. In fact, since $\cF$ is torsion free and $X$ is smooth, $\cF$ is locally free in codimension one. By \cite[II, Proposition 3.7]{Kollar1996}, a general member of $\cV$ is disjoint from $\rm{Sing}(\cF)$, hence $\cF$ is locally free over a general member of $\cV$.
\end{remark}

As an immediate application of Theorem \ref{Araujo Extension}, we can derive a weak version of \cite[Theorem 4.2]{AproduKebekusPeternell2008}.

\begin{cor}\label{AKP Cor}
	Let $X$ be a projective uniruled manifold with $\rho(X)=1$, $\cV$ a minimal covering family of rational curves on $X$. If $T_X$ contains a subsheaf $\cF$ of rank $r$ such that $\cF\vert_{\ell}$ is ample for a general member $[\ell]\in\cV$, then $X\cong \bbP^n$.
\end{cor}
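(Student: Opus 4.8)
The plan is to apply Theorem \ref{Araujo Extension} and exploit the constraint imposed by $\rho(X)=1$. Since $T_X$ contains a subsheaf $\cF$ whose restriction to a general minimal rational curve $\ell$ is ample, the theorem furnishes a dense open subset $X_0\subset X$ together with a $\bbP^{d+1}$-bundle $\varphi_0\colon X_0\rightarrow T_0$ realizing the $\bar\cV$-rationally connected quotient. The key observation is that when $\rho(X)=1$, this quotient must be trivial: if $T_0$ had positive dimension, a general fiber $F$ of $\varphi_0$ would be a positive-dimensional proper subvariety, and pushing forward its class, together with the fact that any two points of $X$ lying in distinct fibers cannot be joined by a chain of $\bar\cV$-curves, would contradict the fact that $\rho(X)=1$ forces any covering family to connect all of $X$ in a single equivalence class. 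Concretely, with Picard number one every curve is numerically a positive multiple of $\ell$, so the $\bar\cV$-equivalence relation on a general point identifies all of $X$; hence $\dim T_0=0$.

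Once $\dim T_0=0$, the $\bbP^{d+1}$-bundle structure degenerates to a single fiber, so $X_0\cong\bbP^{d+1}$ is itself a full projective space sitting as a dense open subset of $X$. The next step is to promote this birational statement to an isomorphism $X\cong\bbP^n$. First I would note that $d+1=\dim X_0=\dim X=n$, so the generic fiber has the right dimension and $X$ is birational to $\bbP^n$. Since $X$ is a smooth projective variety of Picard number one that is birational to $\bbP^n$ and carries a covering family of rational curves whose general member is a line of the projective-space fiber, the anticanonical degree computation $d+2=\det(T_X)\cdot\ell$ together with $d=n-1$ pins down the geometry: $X$ is a Fano manifold of Picard number one whose minimal rational curves behave exactly as lines in $\bbP^n$.

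To finish, I would invoke a rigidity or characterization result to upgrade the open immersion to a global isomorphism. The cleanest route is to observe that the $\bbP^{d+1}=\bbP^n$-fiber, being the closure of a fiber of the rationally connected quotient, must coincide with all of $X$ once one checks that the rational map $X\dashrightarrow \bbP^n$ extends to a morphism and is unramified; alternatively, one can appeal directly to the fact that a smooth projective variety of Picard number one admitting $\bbP^n$ as a dense open subset with compatible families of lines must equal $\bbP^n$, for instance by comparing the two varieties through the common minimal rational curve family and using that $\bbP^n$ has no nontrivial smooth compactification preserving $\rho=1$ and the line class.

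The main obstacle I anticipate is the last step, namely turning the birational identification $X_0\cong\bbP^{d+1}$ into a genuine isomorphism $X\cong\bbP^n$. Establishing $\dim T_0=0$ from $\rho(X)=1$ is essentially formal, and identifying a single fiber with $\bbP^{d+1}$ is immediate from Theorem \ref{Araujo Extension}; but controlling what happens on the boundary $X\setminus X_0$ — ensuring no extra blow-up or contraction occurs and that the projective-space structure extends across the complement — is where the real work lies and likely requires the Picard number hypothesis in an essential way, perhaps via the observation that any contraction on $X$ of Picard number one is either an isomorphism or maps to a point.
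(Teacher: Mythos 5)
Your overall strategy (apply Theorem \ref{Araujo Extension}, then use $\rho(X)=1$ to force $\dim T_0=0$) is the intended one, but the proof as written has a genuine gap at the final step, and ironically the gap hides a triviality rather than ``real work''. Once $\dim T_0=0$, the open set $X_0$ is a single fiber of $\varphi_0$, hence $X_0\cong\bbP^{d+1}$ is a \emph{complete} variety. A complete variety that is a dense open subvariety of the separated irreducible variety $X$ is automatically closed in $X$ (the image of a proper scheme under a morphism to a separated scheme is closed), hence equals $X$ by density and irreducibility. So $X=X_0\cong\bbP^{d+1}=\bbP^n$ and the proof ends there. There is no ``birational identification to promote'', no boundary $X\setminus X_0$ to control, and no need for any rigidity statement, extension of rational maps, or claim that $\bbP^n$ admits no nontrivial smooth compactification. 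As written, your proof never actually closes: you offer several alternative routes for the last step, carry out none of them, and explicitly flag it as an unresolved obstacle; that is a gap, even though it can be filled in one line.

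A secondary issue: your argument for $\dim T_0=0$ is too loose. ``Every curve is numerically a positive multiple of $\ell$, so the $\bar{\cV}$-equivalence relation identifies all of $X$'' is an assertion, not a proof --- numerical proportionality of curve classes says nothing by itself about chains of $\cV$-curves. The standard way to make this rigorous: if $\dim T_0\geq 1$, choose an effective ample divisor $A$ on $T_0$ avoiding the image point of a general member $\ell$ (Theorem \ref{Araujo Extension} guarantees $\ell$ is a line in a fiber of $\varphi_0$, so $\ell\subset X_0$), and let $D$ be the closure of $\varphi_0^{-1}(A)$ in $X$. Then $D$ is a nonzero effective divisor with $D\cdot\ell=0$; but $\rho(X)=1$ forces any nonzero effective divisor to be ample (it has positive degree against an ample class, hence is a positive multiple of that class in $N^1(X)$), so $D\cdot\ell>0$, a contradiction. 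With these two repairs your argument coincides with the paper's, which indeed treats the corollary as an immediate consequence of Theorem \ref{Araujo Extension}.
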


\begin{cor}\cite[Corollary 4.3]{AproduKebekusPeternell2008}\label{AKP}
	Let $X$ be a projective manifold with $\rho(X)=1$. Assume that $T_X$ contains an ample subsheaf, then $X\cong\bbP^n$.
\end{cor}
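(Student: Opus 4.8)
The plan is to prove Corollary \ref{AKP} by reducing it to Corollary \ref{AKP Cor}. The statement of Corollary \ref{AKP} differs from Corollary \ref{AKP Cor} only in its hypothesis: here I am told merely that $T_X$ contains an ample subsheaf $\cF$, whereas Corollary \ref{AKP Cor} requires the a priori weaker-looking condition that $\cF\vert_\ell$ is ample for a general member $[\ell]$ of a minimal covering family $\cV$. So the entire task is to verify that an ample subsheaf restricts to an ample bundle on a general minimal rational curve, after which Corollary \ref{AKP Cor} applies verbatim.

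First I would observe that $X$ must be uniruled, so that a minimal covering family $\cV$ actually exists. This follows because $\cF\subset T_X$ is ample of positive rank: if $X$ were not uniruled then $K_X$ would be pseudo-effective, and more concretely one knows that the existence of an ample (hence positive) subsheaf of $T_X$ forces $X$ to be uniruled (an ample subsheaf cannot live inside the tangent bundle of a non-uniruled variety, by the generic semipositivity/Miyaoka-type results, or equally by the fact that $\det \cF$ is ample and maps to $-K_X$ plus effective contributions). Granting uniruledness, I fix a minimal covering family $\cV$ and take a general point $[\ell]\in\cV$.

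Next I would restrict $\cF$ to the normalization of a general member $\ell$. By the Remark following Theorem \ref{Araujo Extension}, a general member of $\cV$ avoids $\rm{Sing}(\cF)$, so $\cF$ is locally free along $\ell$ and $\cF\vert_\ell$ is a genuine vector bundle on $\bbP^1$. Now I invoke property (ii) of ample sheaves from \S \ref{ample sheaf}: ampleness is preserved under pullback by an immersion, so the restriction of the ample sheaf $\cF$ to $\ell$ is again ample. Combined with local freeness along $\ell$, this says exactly that $\cF\vert_\ell$ is an ample vector bundle on the general member. This is precisely the hypothesis needed in Corollary \ref{AKP Cor}.

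Finally, applying Corollary \ref{AKP Cor} to the pair $(X,\cF)$ with its minimal covering family $\cV$ yields $X\cong\bbP^n$, completing the proof. I expect the only genuinely substantive point to be the verification of uniruledness, since every other step is an immediate citation of the properties already assembled in the excerpt; one must make sure the ampleness of $\cF$ is strong enough to guarantee a covering family of rational curves before the machinery of minimal rational curves can be switched on. The restriction argument itself is routine given properties (ii) and (iv) of ample sheaves together with the genericity of $\ell$.
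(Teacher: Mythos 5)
Your proposal is correct and follows essentially the same route as the paper: the paper's proof also deduces uniruledness from the ample subsheaf via Miyaoka's criterion \cite[Corollary 8.6]{Miyaoka1987}, notes that the restriction $\cF\vert_\ell$ to a (general) curve is ample by the pullback property of ample sheaves, and then invokes Corollary \ref{AKP Cor}. The only cosmetic difference is that you spell out the local-freeness of $\cF$ along a general member via the Remark, which the paper leaves implicit.
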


\begin{proof}
	Since the tangent bundle $T_X$ contains an ample subsheaf $\cF$, $X$ is uniruled \cite[Corollary 8.6]{Miyaoka1987} and it carries a minimal covering family $\cV$ of rational curves . Note that the restriction $\cF\vert_C$ is ample for any curve $C\subset X$, thus we can deduce the result from Corollary \ref{AKP Cor}.
\end{proof}

\begin{remark}\label{Locally Freeness}
	Our approach above is quite different from that in \cite{AproduKebekusPeternell2008}. The proof in \cite{AproduKebekusPeternell2008} is based on a careful analysis of the singular locus of $\cF$ and the locally freeness of $\cF$ has been proved. Even though our argument does not tell anything about the singular locus of $\cF$, it has the advantage to give a rough description of the geometric structure of projective manifolds whose tangent bundle contains a "positive" subsheaf. 
\end{remark}

\section{Foliations and Pfaff fields}

Let $\cS$ be a subsheaf of $T_X$ on a quasi-projective manifold $X$. We denote by $\cS^{reg}$ the largest open subset of $X$ such that $\cS$ is a subbundle of $T_X$ over $\cS^{reg}$. Note that in general $\rm{Sing}(\cS)$ is a proper subset of $X\setminus\cS^{reg}$.

\begin{defn}
	Let $X$ be a quasi-projective manifold and let $\cS\subsetneq T_X$ be a coherent subsheaf of positive rank. $\cS$ is called a foliation if it satisfies the following conditions:
	\begin{enumerate}
		\item $\cS$ is saturated in $T_X$, i.e., $T_X/\cS$ is torsion free. 
		
		\item The sheaf $\cS$ is closed under the Lie bracket.		
	\end{enumerate}
	In addition, $\cS$ is called an algebraically integrable foliation if the following holds.
	\begin{enumerate}
		\item[(iii)] For a general point $x\in X$, there exists a projective subvariety $F_x$ passing through $x$ such that 
		\[\cS\vert_{F_x\cap \cS^{reg}}=T_{F_x}\vert_{F_x\cap\cS^{reg}}\subset T_X\vert_{F_x\cap \cS^{reg}}.\]
		We call $F_x$ the $\cS$-leaf through $x$.
	\end{enumerate}
\end{defn}

\begin{remark}\label{Foliation Generic}
	Let $X$ be a projective manifold and $\cS$ a saturated subsheaf of $T_X$. To show that $\cS$ is an algebraically integrable foliation, it is sufficient to show that it is an algebraically integrable foliation over a Zariski open subset of $X$.
\end{remark}

\begin{example}
	Let $X\rightarrow Y$ be a fibration with $X$ and $Y$ projective manifolds. Then $T_{X/Y}\subset T_X$ defines an algebraically integrable foliation on $X$ such that the general leaves are the fibers.
\end{example}

\begin{example}\cite[4.1]{AraujoDruel2013}\label{EX 2}
	Let $\cF$ be a subsheaf $\cO_{\bbP^n}(1)^{\oplus r}$ of $T_{\bbP^n}$ on $\bbP^n$. Then $\cF$ is an algebraically integrable foliation and it is defined by a linear projection $\bbP^n\dashrightarrow \bbP^{n-r}$. The set of points of indeterminacy $S$ of this rational map is a $r-1$-dimensional linear subspace. Let $x\not\in S$ be a point. Then the leaf passing through $x$ is the $r$-dimensional linear subspace $L$ of $\bbP^n$ containing both $x$ and $S$.
\end{example}

\begin{defn}
	Let $X$ be a projective variety, and $r$ a positive integer. A Pfaff field of rank $r$ on $X$ is a nonzero map $\partial\colon\Omega_X^r\rightarrow \cL$, where $\cL$ is an invertible sheaf on $X$.
\end{defn}

\begin{lemma}\cite[Proposition 4.5]{AraujoDruelKovacs2008}\label{Lift Pfaff}
	Let $X$ be a projective variety and $\morp{n}{\widetilde{X}}{X}$ its normalization. Let $\cL$ be an invertible sheaf on $X$, $r$ positive integer, and $\partial\colon \Omega_X^r\rightarrow\cL$ a Pfaff field. Then $\partial$ can be extended uniquely to a Pfaff field $\widetilde{\partial}\colon \Omega_{\widetilde{X}}^r\rightarrow n^*\cL$.
\end{lemma}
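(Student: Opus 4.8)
The plan is to build $\widetilde\partial$ first on a large open subset of $\widetilde X$ and then extend it by reflexivity, obtaining uniqueness for free. Since $n^*\cL$ is invertible and $\widetilde X$ is normal, the sheaf $\mathcal{H}om_{\cO_{\widetilde X}}(\Omega^r_{\widetilde X},n^*\cL)\cong(\Omega^r_{\widetilde X})^\vee\otimes n^*\cL$ is reflexive, being the twist of the dual of a coherent sheaf on a normal variety by a line bundle. A Pfaff field $\widetilde\partial$ is precisely a global section of this reflexive sheaf, and such a section is determined by, and extends from, its restriction to any open $\widetilde X^\circ\subseteq\widetilde X$ with $\codim(\widetilde X\setminus\widetilde X^\circ)\geq 2$. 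This reduces both existence and uniqueness to constructing $\widetilde\partial$ over such a $\widetilde X^\circ$; equivalently, it suffices to define it at every codimension-one point of $\widetilde X$.

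Because $n$ is birational, $X$ and $\widetilde X$ share a common function field, and the canonical morphism $\wedge^r dn\colon n^*\Omega^r_X\to\Omega^r_{\widetilde X}$ is an isomorphism at the generic point. Pulling $\partial$ back thus yields a rational Pfaff field $\widetilde\partial\colon\Omega^r_{\widetilde X}\dashrightarrow n^*\cL$ agreeing with $n^*\partial$ wherever $n$ restricts to an isomorphism onto the smooth locus of $X$. It remains to check that $\widetilde\partial$ is regular at each codimension-one point $\eta\in\widetilde X$. As $\widetilde X$ is normal it is regular in codimension one, so $B:=\cO_{\widetilde X,\eta}$ is a DVR and $\widetilde X$ is smooth at $\eta$; hence $\Omega^r_{\widetilde X}$ is free near $\eta$, and after trivialising $\cL$ near $n(\eta)$ regularity reduces to the inclusion $\widetilde\partial(\Omega^r_B)\subseteq B$.

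The main obstacle is exactly this local integrality statement when $\eta$ lies over the non-normal locus of $X$, which may be a divisor. Writing $A:=\cO_{X,n(\eta)}$, we have a finite extension $A\subseteq B$ with common fraction field $K=\mathrm{Frac}(A)=\mathrm{Frac}(B)$ and $B$ the integral closure of $A$. The field-theoretic map $\partial_K$ is only $A$-linear, and expressing a generator $db$ with $b\in B$ in terms of $\Omega^1_A$ via an integral dependence relation $b^m+a_{m-1}b^{m-1}+\cdots+a_0=0$ gives, after applying $d$ and $\partial_K$, a denominator equal to the ramification factor $f'(b)$; naively this yields only $f'(b)\,\widetilde\partial(db)\in B$. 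The key point is that $\partial$ is not an arbitrary $K$-linear functional but the restriction of a genuine $\cO_X$-module homomorphism $\Omega^r_X\to\cL$: the relations among the generators of the $A$-module $\Omega^r_A$ constrain the values $\partial(da_{i_1}\wedge\cdots)$ to vanish to exactly the order needed to cancel $f'(b)$, forcing $\widetilde\partial(db)$ to be integral over $B$ and hence to lie in $B$ by integral closedness. I would carry this out by choosing a regular system of parameters $t_1,\dots,t_n\in B$ yielding a $B$-basis of $\Omega^1_B$ and verifying $\widetilde\partial(dt_{i_1}\wedge\cdots\wedge dt_{i_r})\in B$ for every multi-index, using the finiteness of $B$ over $A$ and the defining relations of $\Omega^r_A$ (the case $r=1$ already exhibits the mechanism, as one checks on a cuspidal local model). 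Once regularity in codimension one is secured, the reflexive extension of the first paragraph completes the construction, and uniqueness follows since two extensions agree on the dense open $\widetilde X^\circ$ while the target is torsion-free.
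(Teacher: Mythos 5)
The paper itself offers no proof of this lemma---it is quoted from \cite[Proposition 4.5]{AraujoDruelKovacs2008}---so the comparison is with that cited proof. Your first paragraph is correct and corresponds to the routine half of the argument: on the normal variety $\widetilde{X}$ the sheaf $\mathcal{H}om_{\cO_{\widetilde{X}}}(\Omega^r_{\widetilde{X}},n^*\cL)$ is reflexive, so existence and uniqueness are decided at codimension-one points, and uniqueness is immediate because two extensions agree generically and $n^*\cL$ is torsion-free. The problem is the second half. At a codimension-one point $\eta$ of $\widetilde{X}$ lying over the non-normal locus of $X$ (with $A=\cO_{X,n(\eta)}\subset B=\cO_{\widetilde{X},\eta}$ finite, $B$ a DVR, same fraction field), you correctly isolate the statement that must be proved---that the rational Pfaff field is regular at $\eta$, i.e.\ sends the image of $\Omega^r_B$ into $B$---but you do not prove it. The sentence asserting that ``the relations among the generators of the $A$-module $\Omega^r_A$ constrain the values $\partial(da_{i_1}\wedge\cdots)$ to vanish to exactly the order needed to cancel $f'(b)$'' is a restatement of the desired conclusion, not an argument; it is precisely the mathematical content of the lemma. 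Indeed, for $r=1$ and $\cL=\cO_X$ the claim is exactly Seidenberg's theorem that a derivation of $A$ extends to its integral closure---a genuine theorem with a nontrivial proof, which moreover fails in positive characteristic. Any correct argument must therefore use characteristic zero in an essential way; your sketch never does, beyond the implicit separability needed to divide by $f'(b)$.

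Checking the cuspidal model does not exhibit the general mechanism, because that example enjoys every simplifying feature at once: $A$ is a hypersurface ring, $B=A[b]$ is monogenic with a single defining relation, the singularity is unibranch, and $r=1$. In general the codimension-one point can lie over a multibranch singularity (e.g.\ a node, where the integral closure of $A$ has several maximal ideals and $B$ is only one of its localizations, so the $A$-linearity constraint on $\partial$ couples all branches); $B$ need not be monogenic over $A$; $\Omega^1_A$ has many generators and relations; and for $r\geq 2$ the elements $\widetilde{\partial}(db_1\wedge\cdots\wedge db_r)$ acquire several denominators simultaneously, so a Pfaff field cannot be treated as a derivation. Nothing in your outline explains why the orders of vanishing conspire correctly in these situations---this local integrality is exactly what the proof of \cite[Proposition 4.5]{AraujoDruelKovacs2008} is really about, the reflexivity reduction being the easy part. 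As it stands, the proposal has a genuine gap at its core.
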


Let $X$ be a projective manifold and $\cS\subset T_X$ a subsheaf with positive rank $r$. We denote by $K_{\cS}$ the canonical class $-c_1(\det(\cS))$ of $\cS$. Then there is a natural associated Pfaff field of rank $r$:
\[\Omega_X^r=\wedge^r(\Omega_X^1)=\wedge^r(T_X^{\vee})=(\wedge^r T_X)^{\vee}\rightarrow \cO_X(K_{\cS}).\]

\begin{lemma}\cite[Lemma 3.2]{AraujoDruel2013}\label{Chow}
	Let $X$ be a projective manifold, and $\cS$ an algebraically integrable foliation on $X$. Then there is a unique irreducible projective subvariety $W$ of $\Chow{X}$ whose general point parametrizes a general leaf of $\cS$.
\end{lemma}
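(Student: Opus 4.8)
The plan is to construct the subvariety $W$ as the closure in $\Chow{X}$ of the locus parametrizing the general leaves of $\cS$, and then verify irreducibility and uniqueness. First I would pass to a Zariski open subset $X^\circ \subset X$ over which the algebraically integrable foliation $\cS$ has a well-defined leaf through every point; by the definition of algebraic integrability, through a general point $x$ there passes a projective subvariety $F_x$ whose tangent sheaf agrees with $\cS$ over $F_x \cap \cS^{reg}$. The key is to organize these leaves into a family. I expect to use a relative construction: after shrinking $X$ and possibly stratifying, the leaves fit together into a fibration $\varphi^\circ\colon U^\circ \to T^\circ$ over some base, where each fiber is (an open dense subset of) a leaf $F_x$. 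Each closed leaf then determines a cycle in $X$, hence a point of $\Chow{X}$, giving a rational map $T^\circ \dashrightarrow \Chow{X}$.

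The heart of the argument is then to define $W$ as the closure of the image of this rational map, and to show that $W$ is irreducible of the expected dimension with the general point parametrizing a leaf. Irreducibility should follow from the irreducibility of the base $T^\circ$ (which we may arrange, since the leaves through a connected open set form an irreducible family), because $W$ is the closure of a continuous image of an irreducible variety. The main technical point is to check that the general point of $W$ genuinely parametrizes a leaf $F_x$ as a cycle, i.e.\ that distinct general leaves give distinct points of $\Chow{X}$ and that the cycle-theoretic multiplicities are all one, so that the map $T^\circ \dashrightarrow W$ is generically injective onto a dense constructible subset of $W$.

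For uniqueness, I would argue that any irreducible projective subvariety $W'$ of $\Chow{X}$ whose general point parametrizes a general leaf must have $W$ and $W'$ sharing a dense common open subset: both contain the image of the same rational map from $T^\circ$, because a general leaf is determined as a cycle by the foliation $\cS$ and the point it passes through. Since $W$ and $W'$ are both irreducible and closed in $\Chow{X}$ and agree on a dense subset, they coincide. This reduces uniqueness to the statement that a general leaf is uniquely determined, which is essentially built into the definition of the $\cS$-leaf $F_x$.

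The step I expect to be the main obstacle is controlling the behavior of the leaves over the indeterminacy and singular loci: the leaves $F_x$ are only guaranteed to exist and to have the correct tangent sheaf over $\cS^{reg}$, so taking closures in $\Chow{X}$ may introduce components coming from degenerate or reducible limit cycles. The careful part is therefore to show that the generic leaf is reduced and irreducible as a cycle and that the parametrizing family is flat (or at least equidimensional) over a dense open base, so that the closure $W$ is irreducible and its general point still parametrizes an honest leaf rather than a degeneration. I would handle this by working over a suitable open subset of the base where the universal leaf is a well-behaved family and invoking properness of $\Chow{X}$ only at the final closure step.
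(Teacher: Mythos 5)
The central difficulty of this lemma is the one your proposal assumes away. In this paper the definition of an algebraically integrable foliation is pointwise: condition (iii) only says that through a general point $x$ there exists some projective subvariety $F_x$ tangent to $\cS$ along $F_x\cap\cS^{reg}$; it provides no algebraic structure whatsoever on the \emph{set} of leaves. Your argument begins by positing that ``after shrinking and possibly stratifying, the leaves fit together into a fibration $\varphi^\circ\colon U^\circ\to T^\circ$,'' and from that point on everything else (the map $T^\circ\dashrightarrow\Chow{X}$, the closure $W$, irreducibility, uniqueness) is comparatively routine. But the existence of such an algebraic family of leaves is essentially equivalent to the conclusion of the lemma: an algebraic base $T^\circ$ together with its universal family \emph{is} an open piece of $W$ with its universal cycle. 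The Frobenius theorem (condition (ii)) only gives a local analytic fibration by leaves; the global analytic leaf space need not be anything like a variety, and pointwise algebraicity of the individual leaves does not by itself make the assignment $x\mapsto[\overline{F}_x]$ algebraic. So the ``relative construction'' is not a step that can be deferred or arranged by shrinking --- it is the theorem, and the proposal is circular at exactly this point.

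For comparison: the paper does not prove this lemma at all but quotes it from [AraujoDruel2013, Lemma 3.2], and the proof there works inside $\Chow{X}$ from the start, which is how one escapes the circularity. Tangency to $\cS$ is a closed condition on the normalization of the universal cycle over each irreducible component of $\Chow{X}$, so the locus of points parametrizing reduced, irreducible cycles of dimension $r=\rk(\cS)$ tangent to $\cS$ is constructible; since $\Chow{X}$ has only countably many irreducible components and $\bbC$ is uncountable, the leaf closures through very general points of $X$ must land densely in one fixed component. Inside it one takes for $W$ an irreducible closed subvariety whose associated cycles dominate $X$, and the uniqueness of the analytic leaf through a general point then gives both that the general point of $W$ parametrizes a leaf closure (the universal cycle over $W$ maps birationally onto $X$) and that any other candidate $W'$ shares with $W$ the set $\{[\overline{F}_x]\ :\ x\in X\ \text{general}\}$, which is dense in each, whence $W=W'$. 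Two smaller remarks: your uniqueness step (``$W$ and $W'$ agree on a dense subset'') needs precisely this sweeping/birationality argument to know that the common leaf-points are dense in \emph{both} varieties, not just contained in them; and the obstacle you single out as the main one --- degenerate or reducible limit cycles at the boundary of the family --- is actually harmless, since the statement only concerns the general point of $W$ and closures are taken at the very end.
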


\begin{remark}\label{Lift of Pfaff}
	Let $X$ be a projective manifold, and $\cS$ an algebraically integrable foliation of rank $r$ on $X$. Let $W$ be the subvariety of \Chow{X} provided in Lemma \ref{Chow}. Let $Z\subset W$ be a general closed subvariety of $W$ and let $U\subset Z\times X$ be the universal cycle over $Z$. Let $\widetilde{Z}$ and $\widetilde{U}$ be the normalizations of $Z$ and $U$, respectively. We claim that the Pfaff field $\Omega_X^r\rightarrow \cO_X(K_{\cS})$ can be extended to a Pfaff field $\Omega_{\widetilde{U}/\widetilde{Z}}^r\rightarrow n^*p^*\cO_X(K_{\cS})$.
	\[
	\xymatrix{
		\widetilde{U}\ar[r]^{n}\ar[d]_{\widetilde{q}} & U\ar[d]^{q} \ar@{}[r]|-*[@]{\subset} & Z\times X \ar[d]^{q}\ar[r]^p& X\\
		\widetilde{Z}\ar[r]           & Z\ar[r]^{=} & Z
	}
	\]
	
	Let $V$ be the universal cycle over $W$ with $v\colon V\rightarrow X$. From the proof of \cite[Lemma 3.2]{AraujoDruel2013}, we know that the Pfaff field $\Omega_X^r\rightarrow\cO_X(K_{\cS})$ extends to be a Pfaff field $\Omega_{V}^r\rightarrow v^*\cO_X(K_{\cS})$. It induces a Pfaff field $\Omega_U^r\rightarrow p^*\cO_X(K_{\cS})$. Note that $U$ is irreducible since $Z$ is a general subvariety. By Lemma \ref{Lift Pfaff}, it can be uniquely extended to a Pfaff field $\Omega_{\widetilde{U}}^r\rightarrow n^*p^*\cO_X(K_{\cS})$.
	
	Let $\cK$ be the kernel of the morphism $\Omega_{\widetilde{U}}^r\twoheadrightarrow \Omega_{\widetilde{U}/\widetilde{Z}}^r$. Let $F$ be a general fiber of $\widetilde{q}$ such that its image under $p\circ n$ is a $\cS$-leaf and the morphism $p\circ n$ restricted on $F$ is finite and birational. Let $x\in F$ be a point such that $F$ is smooth at $x$ and $p\circ n$ is an isomorphism at a neighborhood of $x$. Then the composite map $\Omega_{\widetilde{U}}^r\vert_F\twoheadrightarrow \Omega_{\widetilde{U}/\widetilde{Z}}^r\vert_F\twoheadrightarrow\Omega_F^r$ implies that the composite map 
	\[\cK\rightarrow\Omega_{\widetilde{U}}^r\rightarrow n^*p^*\cO_X(K_{\cS})\]
	vanishes in a neighborhood of $x$, hence it vanishes generically over $\widetilde{U}$. Since the sheaf $n^*p^*\cO_X(K_{\cS})$ is torsion-free, it vanishes identically and finally yields a Pfaff field $\Omega_{\widetilde{U}/\widetilde{Z}}^r\rightarrow n^*p^*\cO_X(K_{\cS})$.
\end{remark}

Let $X$ be a projective manifold, and $\cS\subset T_X$ a subsheaf. We define its saturation $\overline{\cS}$ as the kernel of the natural surjection $T_X\twoheadrightarrow \bigslant{(T_X/\cS)}{(torsion)}$. Then $\overline{\cS}$ is obviously saturated.

\begin{thm}\label{Foliation}
	Let $X$ be a projective manifold. Assume that $T_X$ contains an ample subsheaf $\cF$ of rank $r<\dim(X)$. Then its saturation $\overline{\cF}$ defines an algebraically integrable foliation on $X$, and the $\overline{\cF}$-leaf passing through a general point is isomorphic to $\bbP^{r}$.
\end{thm}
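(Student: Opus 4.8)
The plan is to exploit Theorem \ref{Araujo Extension} to produce, over a dense open set, an explicit fibration whose fibers are projective spaces, and then to identify $\overline{\cF}$ with the relative tangent sheaf of that fibration. First I would note that since $\cF\subset T_X$ is ample of positive rank, $X$ is uniruled by Miyaoka's criterion, so it carries a minimal covering family $\cV$ of rational curves; as observed in the Remark following Theorem \ref{Araujo Extension}, $\cF$ is locally free over a general member $[\ell]\in\cV$ and $\cF|_\ell$ is ample there. Thus the hypotheses of Theorem \ref{Araujo Extension} are met, and we obtain a dense open subset $X_0\subset X$ together with a $\bbP^{d+1}$-bundle $\varphi_0\colon X_0\to T_0$ realizing the $\bar{\cV}$-rationally connected quotient, whose fibers are exactly the projective spaces swept out by the curves of $\cV$ meeting $X_0$.

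Next I would compare $\cF$ with the relative tangent sheaf $T_{X_0/T_0}=T_{\varphi_0}$ over $X_0$. On a general line $\ell$ in a fiber $\bbP^{d+1}$, the restriction $T_{X_0/T_0}|_\ell$ is $T_{\bbP^{d+1}}|_\ell=\cO(2)\oplus\cO(1)^{\oplus d}$, which accounts precisely for the positive part of the splitting $T_X|_\ell=\cO(2)\oplus\cO(1)^{\oplus d}\oplus\cO^{\oplus(n-d-1)}$. Since $\cF|_\ell$ is an ample subbundle of $T_X|_\ell$, it must land in this positive part, giving an inclusion $\cF|_\ell\subset T_{X_0/T_0}|_\ell$ of subbundles of $T_X|_\ell$ for general $\ell$. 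Because these inclusions hold along a covering family of curves and both are subsheaves of $T_X$, I would argue that $\cF\subset T_{X_0/T_0}$ as subsheaves of $T_X|_{X_0}$, at least after passing to a smaller dense open set where everything is locally free. Saturating, and using that $T_{X_0/T_0}$ is already saturated in $T_{X_0}$ (its quotient $\varphi_0^*T_{T_0}$ being torsion free on the smooth locus of $\varphi_0$), I would conclude $\overline{\cF}|_{X_0}\subset T_{X_0/T_0}$.

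The crucial point is then to show that the rank-$r$ saturated subsheaf $\overline{\cF}$ of the relative tangent sheaf $T_{X_0/T_0}$ of a $\bbP^{d+1}$-bundle is itself an algebraically integrable foliation whose general leaf is a $\bbP^r$. Fiberwise, $\overline{\cF}$ restricts on a general fiber $F\cong\bbP^{d+1}$ to an ample saturated rank-$r$ subsheaf of $T_F=T_{\bbP^{d+1}}$; by Corollary \ref{AKP Cor} applied to $F$ (whose Picard number is one) such a subsheaf, being ample on every line, forces the sub-bundle structure of either $T_{\bbP^{d+1}}$ or $\cO(1)^{\oplus r}$, and in the latter case Example \ref{EX 2} identifies it as the foliation by $r$-planes coming from a linear projection, whose leaves are $\bbP^r$'s. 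Integrability of $\overline{\cF}$ over the open set $X_0$ would follow because the leaves are assembled fiberwise from these linear $\bbP^r$'s; closedness under Lie bracket can be checked generically, and by Remark \ref{Foliation Generic} algebraic integrability over $X_0$ propagates to all of $X$. The main obstacle I expect is the rank-$r$ versus rank-$(d+1)$ bookkeeping in the middle step: one must rule out that $\overline{\cF}$ projects nontrivially to the base direction and genuinely verify $\overline{\cF}\subset T_{X_0/T_0}$ as \emph{sheaves} rather than merely along the family $\cV$, which requires care with the loci where $\cF$ or the bundle structure degenerate. Once the inclusion into the relative tangent sheaf is secured, the fiberwise analysis via Example \ref{EX 2} and Corollary \ref{AKP Cor} delivers both the integrability and the identification of the general leaf with $\bbP^r$.
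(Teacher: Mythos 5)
Your overall strategy coincides with the paper's: produce the $\bbP^{d+1}$-bundle $\varphi_0\colon X_0\to T_0$ via Theorem \ref{Araujo Extension}, show $\cF|_{X_0}\subset T_{X_0/T_0}$ (your ample-versus-trivial-part argument on lines is essentially the paper's argument that the composite $\cF|_{X_0}\to\varphi_0^*T_{T_0}$ vanishes fiberwise and hence identically by torsion-freeness), then classify the restriction of $\cF$ to fibers and conclude via Example \ref{EX 2} and Remark \ref{Foliation Generic}.

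However, there is a genuine gap at the fiberwise classification, which is the crux of the proof. You invoke Corollary \ref{AKP Cor} to conclude that the restriction of $\overline{\cF}$ to a fiber $F\cong\bbP^{d+1}$ ``forces the sub-bundle structure of either $T_{\bbP^{d+1}}$ or $\cO(1)^{\oplus r}$''. But Corollary \ref{AKP Cor} asserts only that a uniruled manifold of Picard number one whose tangent sheaf contains a suitably ample subsheaf is a projective space; applied to $F$ it returns the tautology $F\cong\bbP^{d+1}$ (already known from the bundle structure) and says nothing about the subsheaf itself. The classification of the \emph{pair} $(F,\cF|_F)$ as $(\bbP^{d+1},T_{\bbP^{d+1}})$ or $(\bbP^{d+1},\cO_{\bbP^{d+1}}(1)^{\oplus r})$ is Theorem B (Andreatta--Wi\'sniewski), and Theorem B requires $\cF|_F$ to be \emph{locally free}. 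Securing this is precisely where the paper invests its work: (a) it uses generic flatness of $(T_X/\cF)|_{X_0}$ over $T_0$ and a Tor-sequence to show that $\cF|_F\to T_X|_F$ remains injective, so that $\cF|_F$ is a torsion-free subsheaf of $T_F$ (restriction to a fiber does not preserve injectivity for free, so your assumption that $\overline{\cF}|_F$ is a subsheaf of $T_F$ needs this justification); and (b) it then invokes the local-freeness result of Aprodu--Kebekus--Peternell on each fiber (the content of Remark \ref{Locally Freeness}, i.e., the full strength of their theorem rather than the weak version recorded as Corollary \ref{AKP Cor}) to conclude that $\cF|_F$ is locally free on all of $F$, whence $\cF$ is locally free over $X_0$ and Theorem B applies. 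Without (a) and (b), ``forces the sub-bundle structure'' is an assertion, not an argument. Note also that the obstacle you flag at the end---verifying $\overline{\cF}\subset T_{X_0/T_0}$ as sheaves---is actually the easy part, handled by torsion-freeness of $\varphi_0^*T_{T_0}$; the real work is the local freeness on entire fibers.
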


\begin{proof}
	Let $\varphi_0\colon X_0 \rightarrow T_0$ be as the morphism provided in Theorem \ref{Araujo Extension}. Since $\cF$ is locally free in codimension one, we may assume that no fiber of $\varphi_0$ is completely contained in $\rm{Sing}(\cF)$.
	
	The first step is to show that $\cF\vert_{X_0}\subset T_{X_0/T_0}$. Since $\varphi_0\colon X_0\rightarrow T_0$ is smooth, we get a short exact sequence of vector bundles,
	\[0\rightarrow T_{X_0/T_0}\rightarrow T_X\vert_{X_0}\rightarrow \varphi_0^* T_{T_0}\rightarrow 0.\]
	The composite map $\cF\vert_{X_0}\rightarrow T_X\vert_{X_0}\rightarrow \varphi_0^* T_{T_0}$ vanishes on a Zariski open subset of every fiber. Since $\varphi_0^* T_{T_0}$ is torsion-free, it vanishes identically, and it follows $\cF\vert_{X_0}\subset T_{X_0/T_0}$.
	
	Next we show that, after shrinking $X_0$ and $T_0$ if necessary, $\cF$ is actually locally free over $X_0$. By generic flatness theorem \cite[Th\'eor\`eme 6.9.1]{Grothendieck1965}, after shrinking $T_0$, we can suppose that $\left(\bigslant{T_X}{\cF}\right)\vert_{X_0}$ is flat over $T_0$. Let $F\cong\bbP^{d+1}$ be an arbitrary fiber of $\varphi_0$. The following short exact sequence of sheaves
	\[0\rightarrow \cF\vert_{X_0}\rightarrow T_X\vert_{X_0}\rightarrow \left(\bigslant{T_X}{\cF}\right)\vert_{X_0}\rightarrow 0\]
	induces a long exact sequence of sheaves
	\[\cTor(\left(\bigslant{T_X}{\cF}\right)\vert_{X_0},\cO_F)\rightarrow \cF\vert_F\rightarrow T_X\vert_F\rightarrow \left(\bigslant{T_X}{\cF}\right)\vert_F\rightarrow 0.\]
	Since $\left(\bigslant{T_X}{\cF}\right)\vert_{X_0}$ is flat over $T_0$, it follows that $\cF\vert_F$ is a subsheaf of $T_X\vert_F$, in particular, $\cF\vert_F$ is torsion-free. Without loss of generality, we may assume that the restrictions of $\cF$ on all fibers of $\varphi_0$ are torsion-free. By Remark \ref{Locally Freeness}, the restrictions of $\cF$ on all fibers of $\varphi_0$ are locally free, it yields particularly that the dimension of the fibers of $\cF$ is constant on every fiber of $\varphi_0$ due to $\cF(x)=(\cF\vert_F)(x)$. Note that no fiber of $\varphi_0$ is contained in $\rm{Sing}(\cF)$, we conclude that the dimension of the fibers $\cF(x)$ of $\cF$ is constant over $X_0$. Hence $\cF$ is locally free over $X_0$.
	
	Now we claim that $\overline{\cF}$ actually defines an algebraically integrable foliation on $X_0$. Let $F\cong \bbP^{d+1}$ be an arbitrary fiber of $\varphi_0$. We know that $(F,\cF\vert_F)$ is isomorphic to $(\bbP^{d+1},T_{\bbP^{d+1}})$ or $(\bbP^{d+1},\cO_{\bbP^{d+1}}(1)^{\oplus r})$ (cf.~Theorem B), therefore $\cF$ defines an algebraically integrable foliation over $X_0$ (cf.~Example \ref{EX 2}). Note that we have $\cF\vert_{X_0}=\overline{\cF}\vert_{X_0}$ since $\cF\vert_{X_0}$ is saturated in $T_{X_0}$. Hence $\overline{\cF}$ also defines an algebraically integrable foliation over $X$ (cf.~Remark \ref{Foliation Generic}).
\end{proof}

\begin{remark}\label{Ampleness Codim one}
	Since $\cF$ is locally free on $X_0$, it follows that $\cO_X(-K_{\cF})\vert_{X_0}$ is isomorphic to $\wedge^r(\cF\vert_{X_0})$ and the invertible sheaf $\cO_X(-K_{\cF})$ is ample over $X_0$. Moreover, as $\cF$ is locally free in codimension one, there exists an open subset $X'\subset X$ containing $X_0$ such that $\codim(X\setminus X')\geq 2$ and  $\cO_X(-K_{\cF})$ is ample on $X'$. 
\end{remark}

\section{Proof of main theorem}

The aim of this section is to prove Theorem \ref{Main thm}. Let $X$ be a normal projective variety, and $X\rightarrow C$ a surjective morphism with connected fibers onto a smooth curve. Let $\Delta$ be an effective Weil divisor on $X$ such that $(X,\Delta)$ is log-canonical over the generic point of $C$. In \cite[Theorem 5.1]{AraujoDruel2013}, they proved that $-(K_{X/C}+\Delta)$ cannot be ample. In the next theorem, we give a variant of this result which is the key ingredient in our proof of Theorem \ref{Main thm}.

\begin{thm}\label{Relative Positivity}
	Let $X$ be a normal projective variety, and $f\colon X\rightarrow C$ a surjective morphism with connected fibers onto a smooth curve. Let $\Delta$ be a Weil divisor on $X$ such that $K_X+\Delta$ is Cartier and $\Delta^{hor}$ is reduced. Assume that there exists an open subset $C_0$ such that the pair $(X,\Delta^{hor})$ is snc over $X_0=f^{-1}(C_0)$. If $X'\subset X$ is an open subset such that no fiber of $f$ is completely contained in $X\setminus X'$ and $X_0\subset X'$, then the invertible sheaf $\cO_X(-K_{X/C}-\Delta)$ is not ample over $X'$.
\end{thm}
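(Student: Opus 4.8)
The plan is to argue by contradiction, adapting the strategy of \cite[Theorem 5.1]{AraujoDruel2013} while tracking the open set $X'$ throughout. So suppose that $\cL:=\cO_X(-K_{X/C}-\Delta)$ is ample on $X'$. The first observation is that the obstruction we seek must be \emph{horizontal}: the fibers of $f$ over $C_0$ lie in $X_0\subset X'$, and on a general such fiber $F$ one has $\cL|_F=\cO_F(-(K_F+\Delta^{hor}|_F))$, so the general fiber is a (log) Fano and $\cL$ is genuinely positive in the vertical directions. Hence any contradiction has to be produced from the geometry over the base curve $C$, and the role of the hypotheses ``$X_0\subset X'$'' and ``no fiber of $f$ lies in $X\setminus X'$'' will be to guarantee that the cycle witnessing non-ampleness can be kept inside $X'$.

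The mechanism I would use is the algebraically integrable foliation $\cG:=T_{X/C}$, of rank $\dim X-1$, whose leaves are the fibers of $f$ and whose canonical class is $K_{X/C}$. Since $f$ is smooth and $(X,\Delta^{hor})$ is snc over $C_0$, the pair is log canonical along the general fibers and the relative logarithmic differential sequence is exact there; this is exactly what makes the associated relative Pfaff field (twisted by $\Delta$) well defined and extendable --- first to the universal family of leaves (Lemma \ref{Chow}), then, after normalization, to the relative differentials of that family (Lemma \ref{Lift Pfaff} and Remark \ref{Lift of Pfaff}). Playing the ampleness of $\cL$ against the positivity carried by the base curve $C$ through this extended Pfaff field then yields the desired contradiction. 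The reducedness of $\Delta^{hor}$ and the snc assumption over $C_0$ are precisely the hypotheses that keep the adjunction and residue computations valid as one passes through the boundary $\Delta^{hor}$.

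The step I expect to be the main obstacle is confining the argument to $X'$, which is the only point genuinely beyond \cite[Theorem 5.1]{AraujoDruel2013}, where $X'=X$. Two difficulties combine here. First, one cannot hope to exhibit a single complete curve $Z\subset X'$ with $\cL\cdot Z\le 0$: already for $X=\bbP(E)\to C$ with $E$ a stable bundle of degree $0$ on a curve of genus $\ge 2$, the class $\cL=\cO_X(-K_{X/C})$ is strictly positive on every curve yet not ample (Mumford's example, where $\cL^{\dim X}=0$), so the obstruction must be numerical rather than curve-theoretic, and this is what the Pfaff-field argument above is meant to supply. Second, the numerical obstruction is naturally detected on a complete subvariety dominating $C$, and one must arrange for such a subvariety to lie in $X'$: here the construction takes place over the general fibers, which sit in $X_0\subset X'$, while the hypothesis that no fiber is contained in $X\setminus X'$ ensures that over each of the finitely many remaining points of $C$ the fiber still meets $X'$, so that the relevant cycle can be chosen, or deformed within its relative class, to avoid $X\setminus X'$. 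Making these two points precise --- the numerical positivity input over $C$ and the simultaneous confinement to $X'$ across the bad fibers --- is the heart of the proof.
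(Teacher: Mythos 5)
Your proposal has the right outer shell (argue by contradiction, keep track of $X'$, use the hypothesis that no fiber lies in $X\setminus X'$ to control what happens over the bad points of $C$), but the engine that is supposed to produce the contradiction is never specified, and the route you sketch for it is a misdirection. The foliation $T_{X/C}$ and the Pfaff-field machinery (Lemma \ref{Chow}, Lemma \ref{Lift Pfaff}, Remark \ref{Lift of Pfaff}) play no role in the proof of this theorem: here the fibration $f\colon X\rightarrow C$ is already given, its ``leaves'' are simply the fibers, and there is no universal family to construct nor any Pfaff field to extend. That machinery belongs to the \emph{application} of this theorem (Proposition \ref{Common Point}), where one starts from the foliation $\overline{\cF}$ and must first build a family over a curve before Theorem \ref{Relative Positivity} can be invoked; you have in effect folded the application into the statement being proved. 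Your sentence ``playing the ampleness of $\cL$ against the positivity carried by the base curve $C$ through this extended Pfaff field then yields the desired contradiction'' is exactly the proof that is missing. (Note also that $f$ is not assumed smooth, only that the pair is snc over $X_0$.)

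The actual mechanism in the paper is a weak positivity argument for direct images, following \cite[Theorem 5.1]{AraujoDruel2013}. One picks an ample divisor $A$ on $C$ supported in $C_0$; by the assumed ampleness, $\cO_X(-m(K_{X/C}+\Delta)-f^*A)$ is very ample over $X'$ for $m\gg 1$, so there is a prime divisor $D'$ on $X'$ in this class with $(X',\Delta^{hor}\vert_{X'}+D')$ snc over $X_0$. Extending $D'$ to a divisor $D=D_+-D_-$ on $X$, the error term $D_-$ is supported in $X\setminus X'$, and the hypothesis on $X'$ enters exactly once and decisively: no fiber of $f$ is contained in $\supp(D_-)$, which forces $f_*\cO_X(lD_-)=\cO_C$. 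After passing to a log resolution $\mu$ that is an isomorphism over $X_0$, Campana's weak positivity theorem applied to
\[
\widetilde{f}_*\bigl(\omega^{lm}_{\widetilde{X}/C}\otimes\cO_{\widetilde{X}}(\widetilde{D})\bigr)\simeq \cO_C(-lA)\otimes\widetilde{f}_*\cO_{\widetilde{X}}(lmE_++l\mu_*^{-1}D_-)\simeq\cO_C(-lA)
\]
shows $\cO_C(-lA)$ is weakly positive, contradicting the ampleness of $A$. Nothing in your proposal supplies a substitute for this semipositivity input --- indeed your own (correct) observation that no curve-theoretic obstruction can exist shows that some such global theorem on direct images is unavoidable --- so the proposal as it stands has a gap at its core.
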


\begin{proof}
	To prove the theorem, we assume to the contrary that the invertible sheaf $\cO_X(-K_{X/C}-\Delta)$ is ample over $X'$. Let $A$ be an ample divisor supported on $C_0$. Then for some $m\gg 1$, the sheaf $\cO_X(-m(K_{X/C}+\Delta)-f^*A)$ is very ample over $X'$ \cite[Corollaire 4.5.11]{Grothendieck1961}. It follows that there exists a prime divisor $D'$ on $X'$ such that the pair $(X',\Delta^{hor}\vert_{X'}+D')$ is snc over $X_0$ and 
	\[D'\sim (-m(K_{X/C}+\Delta)-f^*A)\vert_{X'}.\]
	It implies that there exists a rational function $h\in K(X')=K(X)$ such that the restriction of the Cartier divisor $D=div(h)-m(K_{X/C}+\Delta)-f^*A$ on $X'$ is $D'$ and $D^{hor}$ is the closure of $D'$ in $X$. Note that  we can write $D=D_+-D_-$ for some effective divisors $D_+$ and $D_-$ with no common components. Then we have $\supp(D_-)\subset X\setminus X'$, in particular, no fiber of $f$ is supported on $D_-$. By \cite[Theorem 4.15]{Kollar2013}, there exists a log-resolution $\mu\colon \widetilde{X}\rightarrow X$ such that:
	\begin{enumerate}
		\item The induced morphism $\widetilde{f}=f\circ\mu\colon \widetilde{X}\rightarrow C$ is prepared (cf.~\cite[\S~4.3]{Campana2004}).
		
		\item The birational morphism $\mu$ is an isomorphism over $X_0$.
		
		\item $\mu_*^{-1}\Delta^{hor}+\mu_*^{-1}D^{hor}$ is a snc divisor.
	\end{enumerate}	
	Let $E$ be the exceptional divisor of $\mu$. Note that we have $\widetilde{f}_*(E)\not=C$. Moreover, we also have
	\[K_{\widetilde{X}}+\mu_*^{-1}\Delta+\frac{1}{m}\mu_*^{-1}D_+=\mu^*(K_X+\Delta+\frac{1}{m}D)+\frac{1}{m}\mu_*^{-1}D_-+E_+-E_-.\] 
	where $E_+$ and $E_-$ are effective $\mu$-exceptional divisors with no common components. 
	
	Set $\widetilde{D}=m\mu^{-1}_*\Delta+\mu^{-1}_*D_+ +mE_-$. Then $\widetilde{D}^{hor}=m\mu^{-1}_*\Delta^{hor}+\mu^{-1}_* D^{hor}$ is a snc effective divisor with coefficients $\leq m$. Since $D$ is linearly equivalent to $-m(K_{X/C}+\Delta)-f^*A$, we can write
	\[K_{\widetilde{X}/C}+\frac{1}{m}\widetilde{D}\sim_{\bbQ}-\frac{1}{m}\widetilde{f}^*A+\frac{1}{m}\mu_*^{-1}D_-+E_+.\]
	After multiplying by some positive $l$ divisible enough, we may assume that $lm E_+$ and $lmE_-$ are of integer coefficients. By replacing $\widetilde{D}$ by $l\widetilde{D}$, the weak positivity theorem \cite[Theorem 4.13]{Campana2004} implies that the following direct image sheaf
	\begin{align*}
	\widetilde{f}_*(\omega^{lm}_{\widetilde{X}/C}\otimes \cO_{\widetilde{X}}(\widetilde{D})) & \simeq\widetilde{f}_*(\cO_{\widetilde{X}}(-l\widetilde{f}^*A+lm E_++l\mu_*^{-1}D_-))\\
	& \simeq\cO_C(-lA)\otimes \widetilde{f}_*\cO_{\widetilde{X}}(lmE_++l\mu_*^{-1}D_-)
	\end{align*}
	is weakly positive.
	
	Observe that $\widetilde{f}_*(\cO_{\widetilde{X}}(lmE_++l\mu_*^{-1}D_-))=\cO_C$. Indeed, $E_+$ is a $\mu$-exceptional divisor, it follows $\mu_*(\cO_{\widetilde{X}}(lmE_++l\mu_*^{-1}D_-))=\cO_X(lD_-)$. Note that we have $f_*(\cO_X(lD_-))=\cO_C(P)$ for some effective divisor $P$ on $C$ such that $\supp(P)\subset f(\supp(D_-))$. Let $V$ be an open subset of $C$ and let $\lambda\in H^0(V,\cO_C(P))$, that is, $\lambda$ is a rational function on $C$ such that $div(\lambda)+P\geq 0$ over $V$. It follows that $div(\lambda\circ f)+lD_-\geq 0$ over $f^{-1}(V)$. Since there is no fiber of $f$ completely supported on $D_-$, the rational function $\lambda\circ f$ is regular over $f^{-1}(V)$. Consequently, the rational function $\lambda$ is regular over $V$. It implies that the natural inclusion $\cO_C\rightarrow \cO_{C}(P)$ is surjective, which yields $\widetilde{f}_*(\cO_{\widetilde{X}}(lmE_+ +l\mu_*^{-1}D_-))=\cO_C$. However, this shows that $\cO_C(-lA)$ is weakly positive, a contradiction. Hence $\cO_{X}(-K_{X/C}-\Delta)$ is not ample over $X'$.
\end{proof}

\begin{lemma}\label{Relative divisor}
	Let $X$ be a normal projective variety, and $\morp{f}{X}{C}$ a surjective morphism with reduced and connected fibers onto a smooth curve $C$. Let $D$ be a Cartier divisor on $X$. If there exists a nonzero morphism $\Omega_{X/C}^r\rightarrow \cO_X(D)$, where $r$ is the relative dimension of $f$, then there exists an effective Weil divisor $\Delta$ on $X$ such that $K_{X/C}+\Delta=D$.
\end{lemma}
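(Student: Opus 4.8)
The plan is to realize $\Delta$ as the vanishing divisor of a section produced by the given map, constructing it first over the open set where $f$ is smooth and then extending it over $X$ by normality. Since $X$ is integral and $C$ is a smooth curve, $\morp{f}{X}{C}$ is flat (the local rings of $X$ are torsion-free, hence flat, over the discrete valuation rings of $C$), so all fibres are purely $r$-dimensional. Let $U\subset X$ be the open locus over which $f$ is smooth. There the relative cotangent sequence $0\rightarrow f^*\Omega_C^1\rightarrow\Omega_X^1\rightarrow\Omega_{X/C}^1\rightarrow 0$ is a short exact sequence of vector bundles, so passing to top exterior powers gives a canonical isomorphism $\Omega_{X/C}^r\vert_U\cong\omega_{X/C}\vert_U=\cO_X(K_{X/C})\vert_U$. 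Restricting the hypothesis, I obtain a nonzero morphism $\cO_X(K_{X/C})\vert_U\rightarrow\cO_X(D)\vert_U$ (nonzero because the generic point of $X$ lies in $U$), that is, a nonzero section $s$ of $\cO_X(D-K_{X/C})\vert_U$, whose divisor of zeros $\Delta_0=\mathrm{div}_U(s)$ is effective and satisfies $K_{X/C}+\Delta_0=D$ over $U$.

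The key point --- and what I expect to be the only real obstacle --- is to show $\codim_X(X\setminus U)\geq 2$, since this is what lets the construction descend and forces $\Omega_{X/C}^r$ and $\omega_{X/C}$ to agree in codimension one. Write $X\setminus U$ as the union of $\Sing(X)$ and the locus in the smooth part $X_{\mathrm{sm}}$ where $f$ is not smooth. As $X$ is normal, $\Sing(X)$ has codimension at least two. Over $X_{\mathrm{sm}}$ flatness shows that $f$ is non-smooth at $x$ precisely when the fibre $f^{-1}(f(x))$ is singular at $x$; this is where the hypotheses on the fibres enter. First, restricting to $X_{\mathrm{sm}}$ (which is dense) and applying generic smoothness over $\bbC$, the map $f$ is smooth over a dense open of $C$, so only finitely many fibres are singular. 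Second, each fibre is reduced of pure dimension $r$, hence generically smooth over $\bbC$, so its singular locus has dimension at most $r-1$. Combining the two, the non-smooth locus of $f$ inside $X_{\mathrm{sm}}$ is a finite union of sets of dimension $\leq r-1=\dim X-2$, and therefore $\codim_X(X\setminus U)\geq 2$.

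It remains to extend and interpret. The sheaf $\cO_X(D-K_{X/C})$ is the reflexive rank-one sheaf of a Weil divisor on the normal variety $X$, so, because $\codim_X(X\setminus U)\geq 2$, the section $s$ extends uniquely to a global section of $\cO_X(D-K_{X/C})$. Its divisor of zeros is an effective Weil divisor $\Delta$ on $X$ restricting to $\Delta_0$ over $U$, and it satisfies $\cO_X(\Delta)\cong\cO_X(D-K_{X/C})$, i.e. $K_{X/C}+\Delta\sim D$. Since the relative canonical divisor $K_{X/C}$ is only specified up to linear equivalence, I may choose its representative so that $K_{X/C}+\Delta=D$ as an equality of Weil divisors, which is the desired conclusion.
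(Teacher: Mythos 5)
Your proof is correct and takes essentially the same route as the paper: the paper observes that reduced fibres make $\Omega^r_{X/C}$ locally free (and isomorphic to $\omega_{X/C}$) in codimension one, then factors the given map through the reflexive hull $\omega_{X/C}\simeq\cO_X(K_{X/C})$ to get the section defining $\Delta$ --- which is precisely your restrict-to-$U$-and-extend-by-reflexivity argument in different words. The only difference is expository: you prove the codimension-two bound on the non-smooth locus in detail (flatness, generic smoothness over $\bbC$, and generic smoothness of the reduced fibres), where the paper asserts the corresponding codimension-one local freeness in a single line.
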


\begin{proof}
	Since all the fibers of $f$ are reduced, the sheaf $\Omega_{X/C}^r$ is locally free in codimension one. Hence the reflexive hull of $\Omega^r_{X/C}$ is $\omega_{X/C}\simeq\cO_X(K_{X/C})$. Note that  $\cO_X(D)$ is reflexive, the nonzero morphism $\Omega_{X/C}^r\rightarrow \cO_X(D)$ induces a nonzero morphism $\omega_{X/C}\rightarrow\cO_X(D)$. This shows that there exists an effective divisor $\Delta$ on $X$ such that $K_{X/C}+\Delta=D$.
\end{proof}

As an application of Theorem \ref{Relative Positivity}, we derive a special property about foliations defined by an ample subsheaf of $T_X$. A similar result was established for Fano foliations with mild singularities in the work of Araujo and Druel \cite[Proposition 5.3]{AraujoDruel2013} and we follow the same strategy. 

\begin{prop}\label{Common Point}
	Let $X$ be a projective manifold. If $\cF\subset T_X$ is an ample subsheaf of rank $r<n=\dim(X)$, then there is a common point in the closure of general leaves of $\overline{\cF}$.
\end{prop}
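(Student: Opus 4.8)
The plan is to argue by contradiction, with Theorem~\ref{Relative Positivity} supplying the contradiction. By Theorem~\ref{Foliation} the saturation $\overline{\cF}$ is an algebraically integrable foliation whose leaf through a general point is isomorphic to $\bbP^{r}$, and by Lemma~\ref{Chow} these leaves are parametrized by an irreducible subvariety $W\subset\Chow{X}$; write $\ell_{w}$ for the leaf corresponding to $w\in W$. Since Theorem~\ref{Relative Positivity} concerns fibrations over a \emph{curve}, I would first cut the family down: choose $Z\subset W$ to be a general complete intersection curve and form the universal cycle as in Remark~\ref{Lift of Pfaff}. After normalizing one obtains a family $\widetilde{q}\colon\widetilde{U}\to C$ over the smooth curve $C=\widetilde{Z}$ together with the evaluation morphism $g=p\circ n\colon\widetilde{U}\to X$, whose general fibre maps isomorphically onto a leaf $\cong\bbP^{r}$. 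The statement to be refuted is that the closures $\overline{\ell_{w}}$ of the general leaves have no point in common.

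The engine is the relative Pfaff field. By Remark~\ref{Lift of Pfaff} the tautological Pfaff field $\Omega_{X}^{r}\to\cO_{X}(K_{\overline{\cF}})$ extends to a nonzero map $\Omega_{\widetilde{U}/C}^{r}\to g^{*}\cO_{X}(K_{\overline{\cF}})$. Arranging, by Stein factorization and shrinking, that $\widetilde{q}$ has reduced and connected fibres, Lemma~\ref{Relative divisor} produces an effective Weil divisor $\Delta$ on $\widetilde{U}$ with $K_{\widetilde{U}/C}+\Delta=g^{*}K_{\overline{\cF}}$, hence
\[
\cO_{\widetilde{U}}\!\left(-K_{\widetilde{U}/C}-\Delta\right)\cong g^{*}\cO_{X}\!\left(-K_{\overline{\cF}}\right).
\]
By Remark~\ref{Ampleness Codim one} the sheaf $\cO_{X}(-K_{\overline{\cF}})$ is ample over an open set $X'\subset X$ whose complement has codimension $\geq 2$. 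Restricting $\Delta$ to a general leaf $\bbP^{r}$ gives $0$ or a reduced hyperplane, according to the two possibilities for $\overline{\cF}|_{\mathrm{leaf}}$ coming from Theorem~B, so $\Delta^{hor}$ is reduced and is snc over the generic fibres; this is exactly the data required to feed into Theorem~\ref{Relative Positivity}.

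The contradiction will come from the finiteness behaviour of $g$. The point is that $g^{*}\cO_{X}(-K_{\overline{\cF}})$ is ample over any open $\cW$ on which $g$ is finite and which maps into $X'$, since a finite pullback of an ample invertible sheaf is ample (see \S\ref{ample sheaf}); but Theorem~\ref{Relative Positivity}, applied with $\cW$ in the role of its $X'$, asserts that $\cO_{\widetilde{U}}(-K_{\widetilde{U}/C}-\Delta)$ is \emph{not} ample there. Thus, if the general leaves had no common point, I would argue that $g$ is finite over $X'$ on an admissible open set and reach a contradiction; consequently $g$ must fail to be finite, i.e.\ there is a point $x_{0}$ with $\dim g^{-1}(x_{0})\geq 1$. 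As $g$ is finite on each fibre $\bbP^{r}$ of $\widetilde{q}$, the curve $g^{-1}(x_{0})$ dominates $C$, which says precisely that $x_{0}\in\overline{\ell_{w}}$ for every $w\in Z$: the leaves in this one-parameter family share the common point $x_{0}$. To upgrade this to all general leaves I would let $Z$ vary, setting $W_{x}=\{w\in W:x\in\overline{\ell_{w}}\}$: the above gives $Z\subseteq W_{x_{0}}$ for the general complete intersection curve $Z$, and this should force $W_{x_{0}}=W$, i.e.\ $x_{0}$ lies in the closure of the general leaf.

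The principal obstacle is concentrated in the third paragraph. Because a general leaf $\bbP^{r}$ typically meets the locus $X\setminus X'$ where $\cF$ fails to be locally free (in Example~\ref{EX 2} the leaves even contain the indeterminacy plane, which is exactly where the common point sits), the snc locus $X_{0}=\widetilde{q}^{-1}(C_{0})$ of full general fibres need not map into the ample locus $X'$; reconciling the requirement $X_{0}\subseteq\cW$ of Theorem~\ref{Relative Positivity} with the requirement $g(\cW)\subseteq X'$ needed for ampleness is the delicate step, and it is precisely here that the hypothesis of no common point must be converted into genuine finiteness of $g$ over the relevant open set. A secondary difficulty is the final upgrade, which requires a careful dimension count ensuring that the proper subvarieties $W_{x}$ cannot absorb a general complete intersection curve unless they exhaust $W$.
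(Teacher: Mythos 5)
Your skeleton matches the paper's: form a one-parameter family of leaves over a smooth projective curve, extend the Pfaff field (Remark~\ref{Lift of Pfaff}), produce an effective $\Delta$ with $K_{U/C}+\Delta=\widetilde{p}^{*}K_{\cF}$ (Lemma~\ref{Relative divisor}), and play ampleness of the pullback against Theorem~\ref{Relative Positivity}, with finiteness of the evaluation map as the hinge. But the step you yourself call ``the principal obstacle'' --- converting the no-common-point hypothesis into finiteness of the evaluation map over the relevant open set --- is precisely the mathematical content of the proof, and you leave it unresolved. The paper resolves it in the \emph{choice of the curve}: assuming no common point, Noetherianity gives finitely many general leaves whose closures have empty total intersection, and the complete intersection curve $B$ (cut out by pullbacks of very ample divisors of $X$) is chosen so that $\pi(B)$ passes through the corresponding points of $W$ (condition (i$'$)); then a positive-dimensional fiber of $\widetilde{p}$ could not lie in a fiber of $\pi$ (fibers map finitely into $X$), so it would dominate $C$ and give a point lying in the closure of \emph{every} fiber over $\pi(B)$, contradicting the choice. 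This yields finiteness of $\widetilde{p}\colon U\to X$ outright, and no ``upgrade'' step is ever needed. Your alternative route --- for each general curve $Z$ extract a common point $x_{0}(Z)$ of the leaves over $Z$, then force some $W_{x_{0}}=W$ --- is not justified as written: $x_{0}(Z)$ varies with $Z$, and the claim that the proper closed subsets $W_{x}$ cannot contain a general complete intersection curve requires exactly the ``curve through many general points'' argument that the paper's condition (i$'$) encapsulates; as stated (``this should force $W_{x_{0}}=W$'') it is a gap, not a proof.

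Two further defects. First, ``Stein factorization and shrinking'' does not produce reduced fibers: Lemma~\ref{Relative divisor} needs \emph{all} fibers of $U\to C$ reduced, and you cannot shrink $C$ because the weak positivity argument behind Theorem~\ref{Relative Positivity} requires the base to remain a projective curve; the paper instead performs a finite base change using \cite[Theorem 2.1]{BoschLuetkebohmertRaynaud1995} (condition (iv)). Second, you invoke Remark~\ref{Ampleness Codim one} for $\cO_{X}(-K_{\overline{\cF}})$, but that remark concerns $\cO_{X}(-K_{\cF})$: since $\det\overline{\cF}$ differs from $\det\cF$ by an effective divisor, $-K_{\overline{\cF}}$ need not be ample anywhere, and one must compose the Pfaff field with the natural map $\cO_{X}(K_{\overline{\cF}})\to\cO_{X}(K_{\cF})$ as the paper does. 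Finally, your worry that general leaves meet $X\setminus X'$ conflates $\Sing(\cF)$ (the non-locally-free locus, which is what defines $X'$) with the locus where $\cF$ fails to be a subbundle of $T_{X}$: in Example~\ref{EX 2} the latter is the indeterminacy plane while the former is empty, and in the paper's setup the closures of general leaves are linear subspaces of the complete $\bbP^{d+1}$-fibers of $\varphi_{0}$, hence lie in $X_{0}\subset X'$; so the open-set bookkeeping you fear is delicate is in fact handled by conditions (ii) and (iii) for the special fibers only.
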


\begin{proof}
	Since $\cF$ is torsion-free and $X$ is smooth, $\cF$ is locally free over an open subset $X'\subset X$ such that $\codim(X\setminus X')\geq 2$, in particular, $\cO_X(-K_{\cF})$ is ample over $X'$. By Theorem \ref{Araujo Extension}, there exists an open subset $X_0\subset X$ and a $\bbP^{d+1}$-bundle $\varphi_0\colon X_0\rightarrow T_0$. Moreover, from the proof of Theorem \ref{Foliation}, the saturation $\overline{\cF}$ defines an algebraically integrable foliation on $X$ and we may assume that $\cF$ is locally free over $X_0$. In particular, we have $X_0\subset X'$. In view of Lemma \ref{Chow}, we will denote by $W$ the subvariety of $\Chow X$ parametrizing the general leaves of $\overline{\cF}$ and $V$ the normalization of the universal cycle over $W$. Let $\morp{p}{V}{X}$ and $\morp{\pi}{V}{W}$ be the natural projections. Note that there exists an open subset $W_0$ of $W$ such that $p(\pi^{-1}(W_0))\subset X_0$.
	
	To prove our proposition, we assume to the contrary that there is no common point in the general leaves of $\overline{\cF}$. 
	
	First we show that there exists a smooth curve $C$ with a finite morphism $n\colon C\rightarrow n(C)\subset W$ such that:
	\begin{enumerate}
		\item Let $U$ be the normalization of the fiber product $V\times_W C$ with projection $\morp{\pi}{U}{C}$. Then the induced morphism $\morp{\widetilde{p}}{U}{X}$ is finite onto its image.
		
		\item There exists an open subset $C_0$ of $C$ such that the image of $U_0$ under $p$ is contained in $X_0$. In particular, $U_0=\pi^{-1}(C_0)$ is a $\bbP^{r}$-bundle over $C_0$.
		
		\item For any point $c\in C$, the image of the fiber $\pi^{-1}(c)$ under $\widetilde{p}$ is not contained in $X\setminus X'$.
		
		\item All the fibers of $\pi$ are reduced.
	\end{enumerate}
	Note that we have $X\setminus X'=\Sing{(\cF)}$ and $\codim(\Sing{(\cF)})\geq 2$. We consider the subset 
	\[Z=\{w\in W\ \vert\ \pi^{-1}(w)\subset p^{-1}(\Sing(\cF))\}.\]
	Since $\pi$ is equidimensional, it is a surjective universally open morphism \cite[Th\'eor\`eme 14.4.4]{Grothendieck1966}. Therefore the subset $Z$ is closed. Note that the general fiber of $\pi$ is disjoint from $p^{-1}(\Sing(\cF))$, so $\codim(Z)\geq 1$. Moreover, by the definition of $Z$, we have $p(\pi^{-1}(Z))\subset \Sing(\cF)$ and $\codim(\Sing(\cF))\geq 2$, hence we can choose some very ample divisors $H_i$ $(1\leq i\leq n)$ on $X$ such that the curve $B$ defined by complete intersection $\widetilde{p}^*H_1\cap\cdots\cap\widetilde{p}^*H_n$
	satisfies the following conditions : 
	\begin{enumerate}[label=(\roman*$'$)]
		\item There is no common point in the closure of the general fibers of $\pi$ over $\pi(B)$.
		
		\item $\pi(B)\cap W_0\not=\emptyset$.
		
		\item $\pi(B)\subset W\setminus Z$.
	\end{enumerate}

	Let $B'\rightarrow B$ be the normalization, and $V_{B'}$ the normalization of the fiber product $V\times_B B'$. The induced morphism $V_{B'}\rightarrow V$ is denoted by $\mu$. Then it is easy to check that $B'$ satisfies (i), (ii) and (iii). By \cite[Theorem 2.1]{BoschLuetkebohmertRaynaud1995}, there exists a finite morphism $C\rightarrow B'$ such that all the fibers of $U\rightarrow C$ are reduced, where $U$ is the normalization of $V_{B'}\times_{B'} C$. Then we see at once that $C$ is the desired curve.
	
	The next step is to get a contradiction by applying Theorem \ref{Relative Positivity}. From Remark \ref{Lift of Pfaff}, we see that the Pfaff field $\Omega_X^r\rightarrow \cO_X(K_{\overline{\cF}})$ extends to a Pfaff field $\Omega_{V_{B'}/B'}^r\rightarrow \mu^*p^*\cO_X(K_{\overline{\cF}})$, and it induces a Pfaff field $\Omega_{U/C}^r\rightarrow \widetilde{p}^*\cO_X(K_{\overline{\cF}})$. The natural inclusion $\cF\hookrightarrow\overline{\cF}$ induces a morphism $\cO_X(K_{\overline{\cF}})\rightarrow\cO_X(K_{\cF})$. It implies that we have a Pfaff field $\Omega_{U/C}^r\rightarrow \widetilde{p}^*\cO_X(K_{\cF})$. By Lemma \ref{Relative divisor}, there exists an effective Weil divisor $\Delta$ on $U$ such that $K_{U/C}+\Delta=\widetilde{p}^*K_{\cF}$. 
	
	Let $\Delta^{hor}$ be the $\pi$-horizontal part of $\Delta$. After shrinking $C_0$, we may assume that $\Delta\vert_{U_0}=\Delta^{hor}\vert_{U_0}$. According to the proof of Theorem \ref{Foliation}, for any fiber $F\cong\bbP^r$ over $C_0$, we have $(\widetilde{p}^*{K_{\cF}})\vert_F-K_F=0$ or $H$ where $H\in\vert\cO_{\bbP^r}(1)\vert$. This shows that either $\Delta^{hor}$ is zero or $\Delta^{hor}$ is a prime divisor such that $\Delta\vert_{U_0}=\Delta^{hor}\vert_{U_0}\in\vert\cO_{U_0}(1)\vert$. In particular, the pair $(U,\Delta^{hor})$ is snc over $U_0$ and $\Delta^{hor}$ is reduced. Note that $\widetilde{p}\colon U\rightarrow \widetilde{p}(U)$ is a finite morphism, so the invertible sheaf $\widetilde{p}^*\cO_X(-K_{\cF})$ is ample over $U'=U\cap \widetilde{p}^{-1}(X')$, i.e., the sheaf $\cO_{U}(-K_{U/C}-\Delta)$ is ample over $U'$, which contradicts to Theorem \ref{Relative Positivity}.
\end{proof} 

Now our main result immediately follows:

\begin{proof}[Proof of Theorem \ref{Main thm}]
	Theorem \ref{Araujo Extension} implies that there exists an open subset $X_0\subset X$ and a normal variety $T_0$ such that $X_0\rightarrow T_0$ is a $\bbP^{d+1}$-bundle and $d+1\geq r$. Without loss of generality, we may assume $r<dim(X)$. By Theorem \ref{Foliation} followed by Proposition \ref{Common Point}, $\overline{\cF}$ defines an algebraically integrable foliation over $X$ such that there is a common point in the closure of general leaves of $\overline{\cF}$. However, this cannot happen if $\dim(T_0)\geq 1$. Hence we have $\dim T_0=0$ and $X\cong \bbP^n$. 
\end{proof}

\section{$\bbP^r$-bundles as ample divisors}

As an application of Theorem \ref{Main thm}, we classify projective manifolds $X$ containing a $\bbP^r$-bundle as an ample divisor. This is originally conjectured by Beltrametti and Sommese \cite[Conjecture 5.5.1]{BeltramettiSommese1995}. In the sequel of this section, we follow the same notation and assumptions as in Theorem \ref{Conjecture BS}. 

The case $r\geq 2$ follows from Sommese's extension theorem \cite[Proposition III]{Sommese1976} (see also \cite[Theorem 5.5.2]{BeltramettiSommese1995}). For $r=1$ and $b=1$, it is due to B{\u{a}}descu \cite[Theorem D]{Badescu1984} (see also \cite[Theorem 5.5.3]{BeltramettiSommese1995}). For $r=1$ and $b=2$, it was done by the work of several authors \cite[Theorem 7.4]{BeltramettiIonescu2009}. As mentioned in introduction, Litt proved the following result by which we can deduce Theorem \ref{Conjecture BS} from Corollary \ref{Main Cor}.

\begin{prop}\cite[Lemma 4]{Litt2016}\label{Extension Litt}
	Let $X$ be a projective manifold of dimension $\geq 3$, and let $A$ be an ample divisor. Assume that $\morp{p}{A}{B}$ is a $\bbP^1$-bundle, then either $p$ extends to a morphism $\morp{\widehat{p}}{X}{B}$, or there exists an ample vector bundle $E$ on $B$ and a non-zero map $E\rightarrow T_B$.
\end{prop}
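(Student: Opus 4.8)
The plan is to analyze what happens along the fibers of the $\mathbb{P}^1$-bundle $p\colon A\to B$ and use the ampleness of $A$ to propagate the fibration structure off the divisor. The key geometric input is that each fiber $\ell\cong\mathbb{P}^1$ of $p$ is a rational curve sitting inside the ample divisor $A\subset X$, so its normal bundle in $X$ is positive. First I would study the deformation theory of such a fiber $\ell$ as a curve in $X$: since $A$ is ample, $N_{\ell/A}$ together with $N_{A/X}|_\ell=\cO_X(A)|_\ell$ (which is ample, hence of positive degree) gives $N_{\ell/X}$ a positivity that forces $\ell$ to move in a family covering $X$, at least in a neighborhood of $A$. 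The dichotomy in the statement should come from measuring how much the fibers move transversally to $A$: either they sweep out only $A$ (so the fibration is "rigid" and extends), or they genuinely fill up $X$.

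**Next I would** make the extension case precise. If the fibers $\ell$ do not deform out of $A$, one expects the relative tangent sheaf $T_{A/B}\subset T_A$ to extend to a foliation-type datum on $X$ near $A$, and a Fujita–Sommese-style extension argument (the kind invoked in the paragraph preceding the proposition, via \cite{Sommese1976}) should produce a morphism $\widehat{p}\colon X\to B$ restricting to $p$ on $A$. The cleaner route, matching the paper's toolkit, is to show that the relative cotangent construction on $A$ glues to a Pfaff field on a formal or analytic neighborhood of $A$ in $X$ and that ampleness of $A$ lets one algebraize and globalize it; when this succeeds one lands in the extension alternative.

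**The real content is the other alternative.** If $p$ does not extend, I would show the obstruction to extending $p$ manufactures the ample vector bundle $E$ on $B$ mapping nonzero to $T_B$. Concretely, the conormal exact sequence for $A\subset X$ restricted along the fibration reads
\[
0\to N^\vee_{A/X}\to \Omega^1_X|_A\to \Omega^1_A\to 0,
\]
and pushing the relative version forward along $p$ should convert the positivity of $N_{A/X}=\cO_X(A)|_A$ into an ample bundle on $B$. The failure of $p$ to extend is exactly the non-vanishing of the composite $p_*\bigl(N^\vee_{A/X}\otimes T_{A/B}\bigr)\to T_B$ (an obstruction class living in a suitable $R^1p_*$ or $\mathrm{Ext}$ group), and dualizing/twisting this by the positive normal data yields a nonzero map from an ample bundle $E\coloneqq p_*\bigl(\cO_X(A)|_A\otimes(\text{something})\bigr)$ into $T_B$. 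The ampleness of $E$ is inherited from the ampleness of $\cO_X(A)$ along the $\mathbb{P}^1$-fibers, where $p_*$ of a positive line bundle on a $\mathbb{P}^1$-bundle is an ample bundle on the base.

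**The hard part will be** the precise bookkeeping that turns "the fibration $p$ does not extend" into a genuinely \emph{nonzero} and \emph{ample-source} map $E\to T_B$, rather than merely a nonzero section of some twisted obstruction sheaf. In particular I expect the main obstacle to be controlling the pushforward $p_*$ of the normal-bundle data: one must verify that $\cO_X(A)$ restricted to a fiber $\mathbb{P}^1$ has the correct positive degree so that its pushforward is locally free and ample on $B$, and that the extension obstruction for $p$ is identified with a map out of precisely this pushforward. Handling the case where the splitting type of $\cO_X(A)$ along fibers jumps, and ensuring the construction is compatible with the $\mathbb{P}^1$-bundle structure globally on $B$ rather than just generically, is where the argument will require the most care; this is presumably where Litt's original proof does its real work.
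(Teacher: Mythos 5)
A preliminary remark: the paper does not prove this proposition at all --- it is quoted from \cite[Lemma 4]{Litt2016} and used as a black box --- so your proposal can only be compared with Litt's own argument and judged on correctness. Your third paragraph does contain the right core mechanism, and it is essentially how the actual proof goes: the obstruction to extending $p$ over the $k$-th infinitesimal neighborhood of $A$ in $X$ lies in $H^1(A,\,p^*T_B\otimes\cO_A(-kA))$; since $p_*\cO_A(-kA)=0$, the Leray spectral sequence and relative duality for the $\bbP^1$-bundle $p$ identify this group with $\mathrm{Hom}(E_k,T_B)$, where $E_k=p_*\bigl(\cO_A(kA)\otimes\omega_{A/B}\bigr)$; a nonzero obstruction at some order $k$ is then literally a nonzero map $E_k\to T_B$ with $E_k$ ample, while vanishing at every order yields a morphism from the formal completion $\widehat{X}_{/A}$ to $B$, which algebraizes to an honest morphism because $A$ is ample (Hironaka--Matsumura, $A$ is G3 in $X$).

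Measured against this, your write-up has three genuine gaps. First, your proposed dichotomy (``either the fibers sweep out only $A$, or they fill up $X$'') is vacuous: one always has $N_{\ell/X}\cong\cO_{\bbP^1}^{\oplus(n-2)}\oplus\cO_{\bbP^1}(A\cdot\ell)$ with $A\cdot\ell>0$, which is globally generated with vanishing $H^1$, so the fibers \emph{always} deform out of $A$ and dominate $X$; the case division cannot be read off from the motion of the curves and must be made at the level of extension obstructions. Second, the single first-order obstruction you focus on is not enough: when $A\cdot\ell=1$ one has $E_1=p_*(\cO_A(A)\otimes\omega_{A/B})=0$ (the fiber degree is $-1$), so the first obstruction vanishes for trivial reasons and the content sits at higher order; moreover, even when all obstructions vanish one only gets a \emph{formal} morphism, and the algebraization step --- where the ampleness of $A$ and $\dim X\geq 3$ genuinely enter --- is absent from your sketch (extending $T_{A/B}$ as a Pfaff field or foliation near $A$, as you suggest, would at best produce a foliation, not the map to $B$). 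Third, the ampleness of $E$ is not the formal pushforward principle you invoke: the sheaf being pushed forward is $\cO_A(kA)\otimes\omega_{A/B}$, which is \emph{not} ample, and ``positive degree on the $\bbP^1$-fibers implies ample pushforward'' is false --- for any rank-$2$ bundle $V$ on $B$, $\cO_{\bbP(V)}(1)$ has fiber degree $1$ while its pushforward recovers $V$, which can be as negative as you like. What is true, and what the proof needs, is that ampleness of $\cO_A(A)$ on all of $A$ forces $E_k$ to be ample whenever it is nonzero: writing $\cO_A(A)=\cO_{\bbP(V)}(a)\otimes p^*M$, one computes $E_k\cong S^{ka-2}V\otimes M^{\otimes k}\otimes\det V$ and verifies ampleness, for instance via $\bbQ$-twisted bundles. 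A last small point: $\cO_X(A)$ restricted to a fiber is a line bundle on $\bbP^1$, so it has a degree rather than a splitting type, and that degree $A\cdot\ell$ is constant in the family --- there is no jumping to handle.
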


For the reader's convenience, we outline the argument of Litt that reduces Theorem \ref{Conjecture BS} to Corollary \ref{Main Cor}.

\begin{proof}[Proof of Theorem \ref{Conjecture BS}]
	Since the case $r\geq 2$ is already known, we can assume that $r=1$, i.e., \ $\morp{p}{A}{B}$ is a $\bbP^1$-bundle.
	
	If $p$ extends to a morphism $\morp{\widehat{p}}{X}{B}$, then the result follows from \cite[Theorem 5.5]{BeltramettiIonescu2009} and we are in the case (i) of theorem theorem.
	
	If $p$ does not extend to a morphism $X\rightarrow B$, by Proposition \ref{Extension Litt}, there exists an ample vector bundle $E$ over $B$ with a non-zero map $E\rightarrow T_B$. Own to Corollary \ref{Main Cor}, we have $B\cong\bbP^b$. As the case $b\leq 2$ is also known, we may assume that $b\geq 3$. In this case, by \cite[Theorem 2.1]{FaniaSatoSommese1987}, we conclude that $X$ is a $\bbP^{n-1}$-bundle over $\bbP^1$ and we are in the case (ii) of the theorem.
\end{proof}

\subsection*{Acknowledgements}
	I would like to express my deep gratitude to my advisor A.~H{\"o}ring for suggesting me to work on this question and also for his many valuable discussions, guidance and help during the preparation of this paper. I would like to express my sincere thanks to S.~Druel for his careful reading of the first draft of this paper and for kindly pointing out several mistakes. A special thanks is owed to M.~Beltrametti, D.~Litt and C.~Mourougane for their interest and helpful comments.


\def\cprime{$'$}

\renewcommand\refname{References}
\bibliographystyle{alpha}
\bibliography{MinimalRationalCurves}

\end{document}